\newenvironment{enumerate*}%
  {\begin{enumerate}[(I)]%
    \setlength{\itemsep}{10pt}%
    \setlength{\parskip}{0pt}}%
  {\end{enumerate}}
\newtheorem{theorem}{Theorem}[section]
\newtheorem{proposition}[theorem]{Proposition}
\newtheorem{lemma}[theorem]{Lemma}
\theoremstyle{definition}
\begin{document}

\title[]{The Runsort Permuton}
\subjclass[2010]{}

\author[Noga Alon, Colin Defant, and Noah Kravitz]{Noga Alon}
\address{Department of Mathematics, Princeton University, Princeton, NJ 08544, USA and Schools of Mathematics and Computer Science, Tel Aviv University, Tel Aviv 69978, Israel}
\email{nalon@math.princeton.edu}
\author[]{Colin Defant}
\address[]{Department of Mathematics, Princeton University, Princeton, NJ 08540, USA}
\email{cdefant@princeton.edu}
\author[]{Noah Kravitz}
\address[]{Department of Mathematics, Princeton University, Princeton, NJ 08540, USA}
\email{nkravitz@princeton.edu}

\begin{abstract}
Suppose we choose a permutation $\pi$ uniformly at random from $S_n$. Let $\mathsf{runsort}(\pi)$ be the permutation obtained by sorting the ascending runs of $\pi$ into lexicographic order. Alexandersson and Nabawanda recently asked if the plot of $\mathsf{runsort}(\pi)$, when scaled to the unit square $[0,1]^2$, converges to a limit shape as $n\to\infty$. We answer their question by showing that the measures corresponding to the scaled plots of these permutations $\mathsf{runsort}(\pi)$ converge with probability $1$ to a permuton (limiting probability distribution) that we describe explicitly.  In particular, the support of this permuton is $\{(x,y)\in[0,1]^2:x\leq ye^{1-y}\}$.
\end{abstract}

\maketitle

\section{Introduction}

Let $S_n$ denote the set of permutations of the set $[n]=\{1,\ldots,n\}$. The \emph{scaled plot} of a permutation $\pi=\pi_1\cdots\pi_n\in S_n$ is the diagram showing the points $(i/n,\pi_i/n)$ for $i\in[n]$. The scaled plot of $\pi$ is closely related to the probability measure $\gamma_\pi$ on the unit square $[0,1]^2$ defined as follows. 
Consider a point $(x,y)\in[0,1]^2$. If $(i-1)/n\leq x<i/n$ and $(\pi_i-1)/n\leq y<\pi_i/n$ for some $i\in[n]$, then $\gamma_\pi$ has density $n$ at $(x,y)$; otherwise, $\gamma_\pi$ has density $0$ at $(x,y)$.
In other words, we divide $[0,1]^2$ into an $n\times n$ grid of squares, each with side length $1/n$, and we assign each square a constant density of either $n$ or $0$, according to whether or not the upper right corner of the square is a point in the scaled plot of $\pi$.

\begin{figure}[b]
  \begin{center}\includegraphics[height=5.7cm]{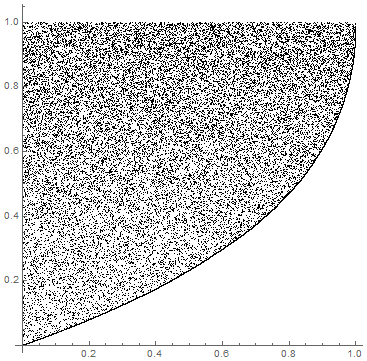}\qquad\qquad\qquad \includegraphics[height=5.5cm]{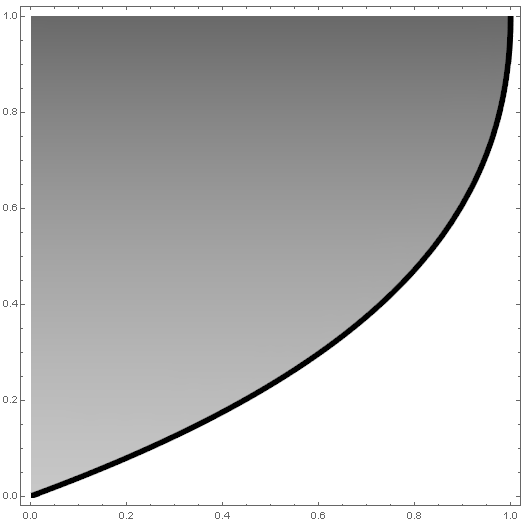}
  \end{center}
  \caption{On the left is the scaled plot of $\mathsf{runsort}(\pi)$, where $\pi$ is a permutation chosen uniformly at random from $S_{50000}$. On the right is the runsort permuton ${\bf R}$. The dark curve $\mathcal C$ is the support of the singular continuous part of ${\bf R}$. Shading within the region $\mathcal C^+$ indicates the density of the absolutely continuous part of ${\bf R}$.}\label{Fig1}
\end{figure}

A \emph{permuton} is a probability measure $\gamma$ on the unit square $[0,1]^2$ that has uniform marginals, in the sense that $\gamma([a,b]\times[0,1])=\gamma([0,1]\times[a,b])=b-a$ for all $0\leq a\leq b\leq 1$. Note that if $\pi\in S_n$, then $\gamma_\pi$ is a permuton.  (This is why we chose to scale with density $n$.) There is a natural topology on the space of permutons obtained by restricting the weak topology on probability measures. This coincides with the topology induced by the metric $\text{d}_\square$ defined by \[\text{d}_{\square}(\gamma_1,\gamma_2)=\sup_{B}\lvert\gamma_1(B)-\gamma_2(B)\rvert,\] where we take the supremum over all axis-parallel rectangles in the unit square. There has been a great deal of recent interest in describing permutons that arise as limits of large random permutations. Several results in this area concern the interplay between random permutations and permutation patterns \cite{Atapour, Dokos, Glebov, Hoppen, Kenyon, Miner}. In a different direction, Dauvergne \cite{Dauvergne} recently proved a beautiful result about permutons arising from random sorting networks (see also \cite{Angel, DauvergneVirag}).
Our goal in this article is to describe the permuton that emerges when we apply an operator called $\mathsf{runsort}$ to a large random permutation.

An \emph{ascending run} of a permutation (henceforth called a \emph{run} for simplicity) is a maximal consecutive increasing subsequence. For instance, the runs of $351476298$ are $35$, $147$, $6$, $29$, and $8$. Given a permutation $\pi\in S_n$, let $\mathsf{runsort}(\pi)$ be the permutation obtained by sorting the runs of $\pi$ into lexicographic order. Equivalently, $\mathsf{runsort}$ sorts the runs so that their smallest entries appear in increasing order. For example, $\mathsf{runsort}(351476298)=147293568$. Note that $\mathsf{runsort}$ is an idempotent operator. 

Motivated by the study of flattened partitions (see \cite{Callan, Mansour, Mansour2, Nabawanda}), Alexandersson and Nabawanda \cite{peaks} proved several interesting combinatorial properties of $\mathsf{runsort}$. When they chose $\pi\in S_{n}$ (for large $n$) uniformly at random and plotted the permutation $\mathsf{runsort}(\pi)$, they observed that it tended to have a very distinctive shape (see the left side of Figure~\ref{Fig1}). Furthermore, they noticed that the scaled plot of $\mathsf{runsort}(\pi)$ appeared to be bounded by a certain enveloping curve, and they asked if this curve approaches some limit curve as $n\to\infty$. In this paper, we answer their question (in a strong form) by showing that $\gamma_{\mathsf{runsort}(\pi)}$ converges with high probability to a specific permuton.

Consider the curve \[\mathcal C=\{(x,y)\in[0,1]^2:x=ye^{1-y}\},\] and let \[\mathcal C^+=\{(x,y)\in[0,1]^2:x<ye^{1-y}\}\] be the region in the unit square above $\mathcal C$.    We define the \emph{runsort permuton} ${\bf R}$ to be the permuton given by 
\[{\bf R}(B)=\int_{B\cap\,\mathcal C^+}e^{y-1}\,dx\,dy+\int_{B\cap\,\mathcal C}(1-y)\,dy\]
for every measurable set $B\subseteq [0,1]^2$.  Thus, ${\bf R}$ is the sum of its absolutely continuous part, which has support $\mathcal C^+$, and its singular continuous part, which has support $\mathcal C$.  One can easily confirm by direct computation that ${\bf R}$ is in fact a permuton (and we encourage the reader to do this).

Our main result is that ${\bf R}$ is the limiting distribution for the image under $\mathsf{runsort}$ of a large random permutation.

\begin{theorem}[Main Theorem]\label{thm:main}
Fix any $\varepsilon>0$, and choose $\pi^{(n)}$ uniformly at random from $S_n$.  Then $d_{\square}(\gamma_{\mathsf{runsort(}\pi^{(n)})},{\bf R})<\varepsilon$ with probability tending to $1$ as $n \to \infty$.
\end{theorem}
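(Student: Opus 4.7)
\emph{Local analysis.} The plan is to describe $\mathsf{runsort}(\pi^{(n)})$ via the local structure of ascending runs in a random permutation, then push the resulting joint law through the change of variables $x = s e^{1-s}$ to recover $\mathbf{R}$. For each value $v \in [n]$, let $S(v)$ denote the smallest element of the run of $\pi$ containing $v$, and set $y = v/n$. Walking backward from the position of $v$, exchangeability of the as-yet-unseen entries shows (up to lower-order boundary effects) that the normalized values $y = y_0 > y_1 > y_2 > \cdots$ encountered within the run form a Markov chain: from state $y_k$ one moves to $y_{k+1} \sim \mathrm{Uniform}(0, y_k)$ with probability $y_k$ and stops with probability $1 - y_k$. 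Summing path densities over stopping times shows that $S(v)/n$ has, conditionally on $y$, density $e^{y - s}(1 - s)$ on $(0, y)$ plus a point mass of weight $1 - y$ at $s = y$; the latter corresponds to $v$ being itself a run start.

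\emph{From local to global.} Let $N_s(\pi) = \#\{j : S(\pi_j) \leq sn\}$. Integrating the above density against $y \sim \mathrm{Uniform}(0,1)$ gives $\mathbb{E}[N_s/n] \to F(s) := s e^{1-s}$, which is the source of the curve $\mathcal{C}$. Since the longest run in $\pi$ has length $O(\log n)$ with high probability, altering a single entry of $\pi$ changes $N_s$ by $O(\log n)$; Azuma--Hoeffding applied to the Doob martingale that exposes $\pi$ one position at a time therefore yields $N_s = F(s)\, n + o(n)$ w.h.p., uniformly in $s$ via a union bound over a fine net together with the monotonicity of $s \mapsto N_s$. Because $\mathsf{runsort}$ concatenates runs in order of starting value, the position of $v$ in $\mathsf{runsort}(\pi)$ equals $N_{s(v)^-}(\pi)$ plus the $O(\log n)$ position of $v$ within its own run, so the $(x,y)$-coordinates of a uniformly random entry of $\mathsf{runsort}(\pi)$ converge in probability to $\bigl(s(v)\, e^{1-s(v)},\, y(v)\bigr)$.

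\emph{Pushforward and convergence in $d_\square$.} The change of variables $x = s e^{1-s}$, with Jacobian $dx/ds = e^{1-s}(1-s)$, sends the density $e^{y-s}(1-s)$ to $e^{y-1}$ on $\mathcal{C}^+$, and the point mass $1-y$ at $s = y$ to the line measure $(1-y)\, dy$ on $\mathcal{C}$. Combined with the uniform law of $y$, this is exactly $\mathbf{R}$. To upgrade to convergence in $d_\square$, partition $[0,1]^2$ into cells of side length $\eta \ll \varepsilon$, deduce from the above that $\gamma_{\mathsf{runsort}(\pi^{(n)})}(\text{cell}) = \mathbf{R}(\text{cell}) + o_P(1)$ for each cell, union-bound over the $O(\eta^{-2})$ cells, and approximate any axis-parallel rectangle by a union of cells up to an $O(\eta)$ boundary error.

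\emph{Main obstacle.} The crux is the concentration of $N_s$. A single swap in $\pi$ can merge or split long runs, so the $O(\log n)$ bound on the martingale differences rests squarely on the run-length tail estimate, and propagating it with uniformity in $s$ is delicate. A closely related issue is verifying that the singular mass $\tfrac{1}{2}$ of $\mathbf{R}$ on $\mathcal{C}$ really sits on the curve (and is not smeared into a strip of width $\Theta(\log n / n)$): the $\approx (1-y)\, n\, dy$ values that start their own runs must land at normalized positions within $o(1)$ of $y\, e^{1-y}$, which requires uniform control of their within-run offsets.
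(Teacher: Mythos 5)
Your proposal is correct in outline and genuinely different in its heart from the paper's, so let me compare.

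\textbf{What you do differently.} The paper establishes the interior density $e^{y-1}$ by setting up the recurrence $p'_n(i,j)=\frac{1}{n}\bigl(1-\sum_{j\le k\le n-1}p_{n-1}(i-1,k)\bigr)$ for the probability that entry $j$ lands in position $i$ without being a run start, iterating it to get an explicit alternating sum, and estimating. You instead model the backward walk from a value $v$ through its own run as a continuous-state Markov chain, derive the density $e^{y-s}(1-s)$ for the normalized run-minimum $s$ given $y=v/n$ (this density and the atom $1-y$ at $s=y$ are correct: one checks $\int_0^y e^{y-s}(1-s)\,ds=y$), and then push forward through the horizontal positioning map $x=se^{1-s}$. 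Because the Jacobian $e^{1-s}(1-s)$ shares the factor $(1-s)$ with the stopping density, the pushforward density collapses to $e^{y-1}$, independent of $x$. This is a nice payoff: the paper explicitly remarks that it does \emph{not} know how to see the horizontal uniformity ``directly,'' and your change-of-variables calculation does, in a fairly transparent way, explain the cancellation. Your derivation of the curve $\mathcal C$ by integrating $\mathbb{P}[S\le s\mid y]$ over $y$ is likewise equivalent to, but phrased differently from, the paper's direct estimate of $\mathbb{E}[L_{\mathsf{runsort}(\pi)}(y)]$ via expected run lengths. The final ``accounting'' step (reduce rectangles to cells, match total mass $1$) mirrors Section~5 of the paper.

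\textbf{Where there is a real gap.} The concentration step is not yet correct as stated, and you sense this yourself. You invoke Azuma--Hoeffding for the Doob martingale with differences $O(\log n)$, justified because ``the longest run in $\pi$ has length $O(\log n)$ with high probability.'' But bounded-difference concentration inequalities need a \emph{deterministic} Lipschitz bound, not a high-probability one: on the bad event (some run of length $\gg\log n$) a single transposition or a single revealed position can move $\Theta(n)$ entries, so the martingale differences are not uniformly $O(\log n)$. The paper resolves this by introducing the auxiliary operator $\overline{\mathsf{runsort}}$, which artificially chops any long run into segments of length $\lfloor\log n\rfloor$, making the $O(\log n)$ Lipschitz bound unconditional (Lemma~\ref{lem:transposition}); it then shows $\mathsf{runsort}=\overline{\mathsf{runsort}}$ with probability $1-n^{-(\log\log n)/2}$ and transfers the bound. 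Some device of this type (or a ``typical bounded differences'' variant of McDiarmid/Azuma) is needed in your argument. A second, more minor gap: the Markov-chain heuristic needs error control. The transition kernel is only approximately Uniform$(0,y_k)$ with continuation probability $y_k$ (the true law is discrete and conditioned on values already consumed), and the $O(1/n)$ per-step error must be shown to accumulate to $o(1)$ over the $O(\log n)$-length walk; this is doable but not automatic. Finally, for the singular part you correctly flag that the run starts must cluster within $o(n)$ of the curve; the paper proves an $O(\sqrt n(\log n)^2)$ bound (Lemma~\ref{lem:curve-concentration}), and your plan will need a comparable quantitative statement to control the cells straddling $\mathcal C$.
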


In other words, if we randomly choose permutations $\pi^{(n)}$, then the measures $\gamma_{\mathsf{runsort}(\pi^{(n)})}$ converge to $\bf R$ with probability $1$.

We remark that a combination of absolutely continuous and singular continuous parts, such as what ${\bf R}$ exhibits, seems to be rare in previous work on permutons. In our setting, however, it is quite natural. As will become clear later, the singular part comes from the first entries of the runs, and the continuous part comes from the remaining entries.  A random permutation in $S_n$ has $(n+1)/2$ runs on average, and the reader can check that indeed half of the total mass of ${\bf R}$ lies on the curve $\mathcal{C}$.  More surprising is that the pointwise density of ${\bf R}$ in $\mathcal{C}^+$ depends only on the vertical coordinate $y$.  This ``horizontal uniformity'' is not combinatorially obvious, and we do not know how to establish it directly (that is, without recourse to the explicit characterization of ${\bf R}$).

In Section~\ref{sec:martingale}, we use martingale concentration inequalities to show that it suffices to understand the expectation of $\gamma_{\mathsf{runsort}(\pi^{(n)})}$.  In Section~\ref{sec:enveloping}, we treat the enveloping curve $\mathcal{C}$ and determine the distribution of mass on it.  Section~\ref{sec:interior} is devoted to the density in the region $\mathcal{C}^+$.  Section~\ref{sec:everything} concludes the proof of Theorem~\ref{thm:main}.  Finally, in Section~\ref{sec:conclusion}, we briefly mention a possible generalization of this type of question where our methods could be adapted.

\subsection{Notation}
We write $\pi_i$ for the $i$-th entry of a permutation $\pi$. We write $\mathbb P[A]$ for the probability of an event $A$ and write $\mathbb E[X]$ for the expected value of a random variable $X$. 

\section{Concentration of Distribution}\label{sec:martingale}
Our task is to show that if $\pi$ is chosen randomly from $S_n$, then $\gamma_{\mathsf{runsort}(\pi)}$ is suitably concentrated everywhere.  The key observation is that transposing two entries of $\pi$ has only a small effect on $\gamma_{\mathsf{runsort}(\pi)}$, as long as $\pi$ does not have any unusually long runs.  In order to make this notion precise, we define the following variant of $\mathsf{runsort}$: Let $\pi \in S_n$ be a permutation, and let $A_1, \ldots, A_t$ be the runs of $\pi$.  We can break each run $A_k$ into shorter subsequences $A_{k,1}, \ldots, A_{k,t(k)}$, which we call \emph{segments}, as follows: If $A_k$ has length smaller than $\log n$, then set $t(k)=1$ and $A_{k,1}=A_k$; if $A_k$ has length greater than $\log n$, then break it directly before every position (of $\pi$) that is an integer multiple of $\lfloor \log n \rfloor$, so that $A_{k,\ell}$ has length exactly $\lfloor \log n \rfloor$ for $1<\ell<t(k)$, and $A_{k,1}$ and $A_{k,t(k)}$ have length at most $\lfloor \log n \rfloor$.  We then define $\overline{\mathsf{runsort}}(\pi)$ to be the permutation obtained by sorting the runs $A_{k,\ell}$ into lexicographic order.  Note that $\mathsf{runsort}(\pi)=\overline{\mathsf{runsort}}(\pi)$ if all of the runs of $\pi$ have length at most $\lfloor \log n \rfloor$; in particular, the following lemma tells us that if $\pi$ is chosen randomly from $S_n$, then $\mathsf{runsort}(\pi)=\overline{\mathsf{runsort}}(\pi)$ with probability tending very quickly to $1$ as $n \to \infty$.

\begin{lemma}\label{lem:loglog}
Suppose $\pi$ is chosen uniformly at random from $S_n$. If $n$ is sufficiently large, then the probability that every run of $\pi$ has length at most $\log n$ is at least $1-n^{-(\log\log n)/2}$. 
\end{lemma}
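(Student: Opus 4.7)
The plan is to apply the union bound over possible starting positions of a long run, using the fact that any fixed window of $k$ consecutive positions of a uniform random $\pi\in S_n$ holds its values in increasing order with probability exactly $1/k!$.

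First I would set $k=\lfloor \log n\rfloor+1$ and note that if some run of $\pi$ has length greater than $\log n$, then that run contains $k$ consecutive positions whose values are in increasing order; equivalently, there is an index $i$ with $1\le i\le n-k+1$ such that $\pi_i<\pi_{i+1}<\cdots<\pi_{i+k-1}$. For any fixed such $i$, this event has probability $1/k!$, because conditional on the (unordered) set of values $\{\pi_i,\ldots,\pi_{i+k-1}\}$, each of the $k!$ relative orderings of those entries is equally likely under the uniform measure on $S_n$. Summing over the at most $n$ possible starting positions,
\[
\mathbb{P}[\pi\text{ has a run of length greater than }\log n]\le \frac{n}{k!}.
\]

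Next I would estimate $k!\ge (k/e)^k$. Since $\log n\le k\le \log n+1$, we have $e^k\le e\cdot n$ and $k^k\ge(\log n)^{\log n}=n^{\log\log n}$, so
\[
\frac{n}{k!}\le \frac{n\,e^k}{k^k}\le \frac{e\,n^2}{n^{\log\log n}}=e\,n^{2-\log\log n}.
\]
For $n$ large enough that $(\log\log n)/2 - 2$ is bounded below by a positive constant (e.g., $\log\log n\ge 5$), this quantity is comfortably less than $n^{-(\log\log n)/2}$, which is the required bound.

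There is no serious obstacle here: the argument is essentially a one-line union bound plus Stirling's lower bound on $k!$, and it furnishes a weak version of the classical estimate that the longest run in a uniform random permutation has length of order $\log n/\log\log n$. The slack in the bound (the true longest run is considerably shorter than $\log n$) is more than enough to guarantee that in the rest of the paper one can freely replace $\mathsf{runsort}$ by $\overline{\mathsf{runsort}}$, since the exceptional event contributes only $n^{-\omega(1)}$ to any probability.
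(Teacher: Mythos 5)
Your proof is correct and takes essentially the same route as the paper's: a union bound over the $O(n)$ starting positions of a length-$(\lfloor\log n\rfloor+1)$ window, each of which is increasing with probability $1/k!$, followed by Stirling's bound on $k!$. The paper states the Stirling step more tersely, but the argument is identical.
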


\begin{proof}
Let $T=\left\lfloor\log n\right\rfloor+1$. For each $i\in[n-T+1]$, the probability that the entries of $\pi$ in positions $i,i+1,\ldots,i+T-1$ appear in increasing order is $1/T!$. Therefore, the probability that there is a run of $\pi$ of length at least $T$ is at most $(n-T+1)/T!$, which, by Stirling's Formula, is at most $n^{-(\log\log n)/2}$ for $n$ sufficiently large. 
\end{proof}

\begin{lemma}\label{lem:transposition}
Let $B=[x_1, x_2] \times [y_1, y_2] \subset [0,1]^2$ be a rectangle.  Let $\pi \in S_n$ be a permutation, and let $\pi'=\pi \circ (i_1 \, i_2)$ be the permutation obtained from $\pi$ by swapping the entries in positions $i_1$ and $i_2$ (that is, applying the transposition $(i_1 \, i_2)$). Then $|\gamma_{\overline{\mathsf{runsort}}(\pi)}(B)-\gamma_{\overline{\mathsf{runsort}}(\pi')}(B)| \leq 20\log n/n$.
\end{lemma}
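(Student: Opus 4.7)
The plan is to carefully track how the transposition $\pi \mapsto \pi' = \pi \circ (i_1\, i_2)$ alters the segment structure used by $\overline{\mathsf{runsort}}$. First, a transposition can change the descents of $\pi$ only at the positions $i_1-1, i_1, i_2-1, i_2$, so at most four run boundaries are added or removed. Because segment break points within any run occur at fixed multiples of $M := \lfloor \log n \rfloor$, this implies that only $O(1)$ segments of $\pi$ fail to appear as segments of $\pi'$, and vice versa. Each such ``changed'' segment has length at most $M$, so the total number $|D|$ of positions whose segment (as a position-value pair) differs between $\pi$ and $\pi'$ is $O(\log n)$.

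Next, I would exploit the canonical bijection $\phi$ between unchanged segments of $\pi$ and those of $\pi'$. Because a segment's position in $\overline{\mathsf{runsort}}$ equals $1$ plus the total length of all lex-smaller segments, and the contributions from matched pairs cancel in this sum, the starting position of any unchanged segment in $\sigma := \overline{\mathsf{runsort}}(\pi)$ differs from its position in $\sigma' := \overline{\mathsf{runsort}}(\pi')$ by at most $|D| = O(\log n)$. In particular, $\phi$ extends to a bijection between the unchanged positions of $\sigma$ and $\sigma'$ under which corresponding positions share the same value and lie within horizontal distance $O(\log n)/n$ of one another.

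Finally, to bound $|\gamma_\sigma(B) - \gamma_{\sigma'}(B)|$, I would first approximate $B = [x_1, x_2] \times [y_1, y_2]$ by an integer-aligned rectangle $R = [a/n, b/n] \times [c/n, d/n]$, which introduces only an $O(1/n)$ error in either $\gamma_\sigma$ or $\gamma_{\sigma'}$. For the aligned rectangle, $n \cdot \gamma_\sigma(R)$ is exactly the count of positions $j \in [a,b]$ with $\sigma_j \in [c,d]$, and similarly for $\sigma'$. The discrepancy between these two counts splits into (i) a contribution of size $O(\log n)$ from changed positions (which number at most $|D|$ in either permutation), and (ii) a contribution of size $O(\log n)$ from unchanged positions whose $\phi$-image crosses one of the vertical lines $x = a/n$ or $x = b/n$ (only positions within horizontal distance $O(\log n)/n$ of these lines can be shifted across). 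Dividing by $n$ yields the desired $O(\log n)/n$ bound, with the constant $20$ chosen generously to absorb the various small-case overheads. The main obstacle is the careful constant-tracking needed to verify the specific factor $20$; this requires a brief case analysis to handle situations such as when $i_1$ and $i_2$ lie in the same run, are adjacent, or are near the endpoints of $\pi$, but no individual step is conceptually difficult.
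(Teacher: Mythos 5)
Your proof proposal follows essentially the same route as the paper's argument: you reduce to integer-aligned rectangles, observe that the transposition affects only $O(1)$ segments (hence $O(\log n)$ entries), argue that every entry outside the affected segments shifts horizontally by at most $O(\log n)$, and then bound the discrepancy by a contribution from the affected entries plus a contribution from unchanged entries near the two vertical boundary lines. The only cosmetic difference is in the reduction step, where the paper simply observes that the difference $|\gamma_{\overline{\mathsf{runsort}}(\pi)}(B)-\gamma_{\overline{\mathsf{runsort}}(\pi')}(B)|$ is maximized at integer-aligned $B$ (it is piecewise bilinear in the coordinates, so its maximum is achieved at grid vertices), while you approximate $B$ by an integer-aligned rectangle at the cost of an additional $O(1/n)$ error; both work, and your constant-tracking caveats are fair since the paper's constant $20$ is deliberately generous.
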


\begin{proof}
We note that $|\gamma_{\overline{\mathsf{runsort}}(\pi)}(B)-\gamma_{\overline{\mathsf{runsort}}(\pi')}(B)|$, viewed as a function of $x_1, x_2, y_1, y_2$, clearly attains its maximum value when $x_1, x_2, y_1, y_2$ are all integer multiples of $1/n$, so it suffices to prove the lemma when $B$ has this special form.  In this case, we discretize the problem by writing
\begin{equation}\label{eqn:transposition}
\gamma_{\overline{\mathsf{runsort}}(\pi)}(B)=\frac{\#\{i:x_1n<i \leq x_2n\text{ and } y_1n<\overline{\mathsf{runsort}}(\pi)_i \leq y_2n\}}{n}
\end{equation}
(since each point $(i, \overline{\mathsf{runsort}}(\pi)_i)$ contributes mass $1/n$ after rescaling), and likewise for $\gamma_{\overline{\mathsf{runsort}}(\pi')}(B)$.  When we apply the transposition $(i_1 \, i_2)$ to $\pi$, all but at most four of the segments $A_{k,\ell}$ remain the same.  Since each segment has length at most $\log n$, we see that at most $4 \log n$ entries are in these ``affected'' segments.  It follows that for each entry $j$ not in one of these affected runs, the horizontal positions of $j$ in $\overline{\mathsf{runsort}}(\pi)$ and $\overline{\mathsf{runsort}}(\pi')$ differ by at most $4 \log n$.  In particular, these horizontal positions are either both in the interval $(x_1n, x_2n]$ or neither in this interval, unless the position in $\overline{\mathsf{runsort}}(\pi)$ was within $4\log n$ of either $x_1n$ or $x_2n$.  So the change in the numerator on the right-hand side of \eqref{eqn:transposition} coming from these entries is at most $16 \log n$, and we must absorb an additional error of $4 \log n$ because we have no control over the positions of the entries of the affected segments.
\end{proof}

(It is possible to replace the constant $20$ by $4$ in this lemma, but we do not optimize this constant because it is irrelevant in what follows.)

We require the following martingale inequality from \cite{maurey} (see also page 35 of the book \cite{Milman}).

\begin{proposition}[\cite{maurey, Milman}]\label{prop:martingale}
Let $f:S_n \to \mathbb{R}$ be a function on permutations such that if $\pi, \pi' \in S_n$ differ by a transposition, then $|f(\pi)-f(\pi')| \leq z$.  Then for $\pi \in S_n$ chosen uniformly at random, we have
$$\mathbb{P}[|f(\pi)-\mathbb{E}[f]| \geq cz] \leq 2e^{-c^2 /(4n)}.$$
\end{proposition}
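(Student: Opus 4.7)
The approach is the standard Doob-martingale method combined with the Azuma--Hoeffding inequality. First I would take $\pi$ uniform in $S_n$ and define the filtration $\mathcal{F}_i = \sigma(\pi_1, \ldots, \pi_i)$ for $0 \le i \le n$, together with the martingale $M_i = \mathbb{E}[f(\pi) \mid \mathcal{F}_i]$. Then $M_0 = \mathbb{E}[f]$ and $M_n = f(\pi)$, so it suffices to prove a bounded-increment estimate: once $|M_i - M_{i-1}| \le z$ almost surely for each $i$, Azuma--Hoeffding yields $\mathbb{P}[|f(\pi) - \mathbb{E}[f]| \ge cz] \le 2\exp(-c^2/(2n))$, which is even slightly stronger than the stated bound.

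The heart of the argument is a coupling that uses the symmetry of the uniform measure on $S_n$ to bound each increment. Conditional on $\mathcal{F}_{i-1}$, set $R = [n] \setminus \{\pi_1, \ldots, \pi_{i-1}\}$, and observe that the suffix $(\pi_i, \ldots, \pi_n)$ is a uniformly random bijection $\{i, \ldots, n\} \to R$. For any two candidate values $a, b \in R$ for $\pi_i$, I would sample $\pi$ from the conditional law given $\pi_i = a$, let $j_b > i$ be the unique position with $\pi_{j_b} = b$, and put $\pi' = \pi \circ (i\, j_b)$. The map $\pi \mapsto \pi'$ is an involutive bijection between the supports of the two conditional laws; it transports the uniform measure to the uniform measure, so $\pi'$ is distributed as $\pi$ conditional on $\pi_i = b$. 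Because $\pi'$ differs from $\pi$ by a single transposition, the hypothesis gives $|f(\pi) - f(\pi')| \le z$, and averaging shows that the function $a \mapsto \mathbb{E}[f(\pi) \mid \mathcal{F}_{i-1}, \pi_i = a]$ has oscillation at most $z$. Since $M_i - M_{i-1}$ is this function centered at its mean (with respect to $\pi_i$), we conclude $|M_i - M_{i-1}| \le z$.

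The main (rather minor) obstacle is verifying the coupling: one must check that conditioning on $\pi_i = a$ versus $\pi_i = b$ really can be bridged by a single transposition while preserving uniformity of the suffix. This reduces to noting that $\pi \mapsto \pi \circ (i\, j_b)$ is a measure-preserving bijection between the two conditional supports, whose inverse is given by the same recipe with $a$ and $b$ interchanged. Once this bookkeeping is done, the rest of the proof is just an invocation of Azuma--Hoeffding for the martingale $(M_i)$ with uniformly bounded differences.
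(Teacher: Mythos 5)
Your argument is correct. Note that the paper does not supply its own proof of this proposition: it is cited to Maurey and to Milman--Schechtman, and the paper merely remarks, just after the statement, that the standard proof of Azuma's Inequality adapted to $S_n$ (pointing to Theorem~7.2.1 of Alon--Spencer) yields the sharper exponent $-c^2/(2n)$. Your proposal is precisely that argument: the Doob martingale $M_i=\mathbb{E}[f(\pi)\mid\pi_1,\dots,\pi_i]$, the transposition coupling to establish the bounded-increment condition, and an invocation of Azuma--Hoeffding. The coupling step is sound: for fixed $\pi_1,\dots,\pi_{i-1}$, the map $\pi\mapsto\pi\circ(i\,j_b)$ is a measure-preserving bijection between the conditional laws given $\pi_i=a$ and $\pi_i=b$, so the function $a\mapsto\mathbb{E}[f(\pi)\mid\mathcal{F}_{i-1},\pi_i=a]$ has oscillation at most $z$, and a random variable confined to an interval of length $z$ lies within $z$ of its mean, giving $|M_i-M_{i-1}|\leq z$. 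As you observe, this produces $2e^{-c^2/(2n)}$, which is stronger than the stated $2e^{-c^2/(4n)}$; the weaker constant in the proposition simply matches the form of the bound in the cited sources. (One could even push to $2e^{-2c^2/n}$ by feeding the one-sided length-$z$ interval directly into the Hoeffding-lemma version of Azuma, but this is immaterial for the paper's purposes.)
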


We note that by following the standard proof of Azuma's Inequality (see, e.g., \cite[Theorem~7.2.1]{AS}) with the obvious modification needed to deal with permutations, it is possible to improve the above exponent from $-c^2/(4n)$ to $-c^2/(2n)$, but this is not necessary for our applications.

Before applying this proposition, we fix a bit of notation.  For $1 \leq i,j \leq n$, let $p_n(i,j)$ denote the probability that the entry $j$ is in the $i$-th position of $\mathsf{runsort}(\pi)$ when $\pi \in S_n$ is chosen uniformly at random.  For $B=[x_1, x_2] \times [y_1, y_2] \subseteq [0,1]^2$, let $$E_n(B)=\frac{1}{n}\sum_{\substack{x_1n<i \leq x_2n,\\ y_1n<j \leq y_2n}} p_n(i,j).$$  Define the analogous quantities $\overline{p}_n(i,j)$ and $\overline{E}_n(B)$ with $\mathsf{runsort}$ replaced by $\overline{\mathsf{runsort}}$.

\begin{lemma}\label{lem:concentration}
Fix any small $\varepsilon>0$, and suppose that $\pi \in S_n$ is chosen uniformly at random.  If $n$ is sufficiently large (as a function of $\varepsilon$), then with probability at least $1-\varepsilon$, the concentration inequality
$$|\gamma_{\mathsf{runsort}(\pi)}(B)-E_n(B)|<\varepsilon$$
holds simultaneously for all axis-parallel rectangles $B \subseteq [0,1]^2$.
\end{lemma}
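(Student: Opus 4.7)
The plan is to apply the martingale concentration inequality of Proposition~\ref{prop:martingale} to the function $f_B(\pi)=\gamma_{\overline{\mathsf{runsort}}(\pi)}(B)$ for a single axis-parallel rectangle $B$, and then upgrade this to a uniform-in-$B$ statement by a discretization and union bound. The first move is to replace $\mathsf{runsort}$ by $\overline{\mathsf{runsort}}$ using Lemma~\ref{lem:loglog}: the two operators agree on $\pi$ with probability at least $1-n^{-(\log\log n)/2}$, and since $\gamma_{\mathsf{runsort}(\pi)}(B),\gamma_{\overline{\mathsf{runsort}}(\pi)}(B)\in[0,1]$, the same bound forces $|E_n(B)-\overline E_n(B)|\leq n^{-(\log\log n)/2}$ for every $B$. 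Hence it is enough to prove the concentration inequality with $\overline{\mathsf{runsort}}$ and $\overline E_n$ in place of $\mathsf{runsort}$ and $E_n$.

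For a fixed rectangle $B$, Lemma~\ref{lem:transposition} supplies exactly the Lipschitz hypothesis of Proposition~\ref{prop:martingale} with constant $z=20\log n/n$. Choosing $c$ of order $\varepsilon n/\log n$ so that $cz\leq\varepsilon/3$ yields
\[\mathbb P\bigl[|f_B(\pi)-\overline E_n(B)|\geq\varepsilon/3\bigr]\leq 2\exp\!\bigl(-c_\varepsilon\, n/(\log n)^2\bigr)\]
for some constant $c_\varepsilon>0$ depending only on $\varepsilon$ --- a super-polynomial tail that easily survives any polynomial-size union bound. To pass from a single rectangle to all rectangles simultaneously, I would discretize: fix $\delta=\delta(\varepsilon)$ small enough that $8\delta\leq\varepsilon/3$, and let $\mathcal R_\delta$ be the finite family of axis-parallel rectangles with corners in $\delta\mathbb Z\cap[0,1]$. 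Any axis-parallel $B\subseteq[0,1]^2$ admits some $B'\in\mathcal R_\delta$ with $B\triangle B'$ contained in four axis-parallel strips of width $\delta$; since $\gamma_{\overline{\mathsf{runsort}}(\pi)}$ is a permuton (its marginals are uniform because $\overline{\mathsf{runsort}}(\pi)$ is itself a permutation of $[n]$) and $\overline E_n$ is an average of permutons, both measures assign mass at most $4\delta$ to $B\triangle B'$, so $|f_B(\pi)-f_{B'}(\pi)|+|\overline E_n(B)-\overline E_n(B')|\leq 8\delta\leq\varepsilon/3$. A union bound of the single-rectangle concentration over the $O(\varepsilon^{-4})$ rectangles in $\mathcal R_\delta$, combined with this discretization bound and the initial reduction to $\overline{\mathsf{runsort}}$, completes the argument for $n$ large.

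The only step requiring any real thought is the passage from single-rectangle concentration to a supremum over all rectangles; the single-rectangle concentration is immediate from the tools already on the table. The discretization argument works precisely because every measure in sight has uniform marginals, so that thin strips carry small mass --- a convenient feature that is automatic for permutons and their averages. Beyond this, the proof is a routine accounting of three error contributions (of sizes $n^{-(\log\log n)/2}$, $\varepsilon/3$, and $\varepsilon/3$) that sum to less than $\varepsilon$ for $n$ sufficiently large.
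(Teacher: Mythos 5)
Your proposal is correct and follows essentially the same strategy as the paper: reduce from $\mathsf{runsort}$ to $\overline{\mathsf{runsort}}$ via Lemma~\ref{lem:loglog}, combine Lemma~\ref{lem:transposition} with Proposition~\ref{prop:martingale} for a single rectangle, and then take a union bound over a finite family of rectangles. The only cosmetic difference is the discretization scale: you pass to an $\varepsilon$-dependent $\delta$-grid (so the union bound is over $O(\varepsilon^{-4})$ rectangles, justified by the uniform-marginals observation), whereas the paper discretizes to the $1/n$-grid (union bound over $n^4$ rectangles); both are harmless since the single-rectangle tail is $\exp(-\Omega(n/(\log n)^2))$.
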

\begin{proof}
We make two reductions.  First, write $B=[x_1, x_2] \times [y_1, y_2]$ and define $B'=[x'_1, x'_2] \times [y'_1, y'_2]$ where $x'_1=\lfloor nx_1 \rfloor/n$, $x'_2=\lfloor nx_2 \rfloor/n$, $y'_1=\lfloor ny_1 \rfloor/n$, $y'_2=\lfloor ny_2 \rfloor/n$.  Then $E_n(B)=E_n(B')$ and $$|\gamma_{\mathsf{runsort}(\pi)}(B)-\gamma_{\mathsf{runsort}(\pi)}(B')| \leq 2/n.$$
Since this can be made smaller than $ \varepsilon/3$, it suffices to show that $$|\gamma_{\mathsf{runsort}(\pi)}(B)-E_n(B)|<2\varepsilon/3$$
for all $B$ whose vertices have coordinates that are integer multiples of $1/n$.  Note that in this case, the quantities $E_n(B)$ and $\overline{E}_n(B)$ are precisely the expected values of $\gamma_{\mathsf{runsort}(\pi)}(B)$ and $\gamma_{\overline{\mathsf{runsort}}(\pi)}(B)$, respectively.

Second, we know from Lemma~\ref{lem:loglog} that $\mathsf{runsort}(\pi)=\overline{\mathsf{runsort}}(\pi)$ with probability at least $1-n^{-(\log \log n)/2}$.  In particular, $|p_n(i,j)-\overline{p}_n(i,j)| \leq n^{-(\log \log n)/2}$ for all $i,j$, so $|E_n(B)-\overline{E}_n(B)| \leq n \cdot n^{-(\log \log n)/2}$.  We can make this last quantity smaller than $\varepsilon/3$ by choosing $n$ large enough, so it suffices to show that with probability at least $1-2\varepsilon/3$, the inequality $$|\gamma_{\overline{\mathsf{runsort}}(\pi)}(B)-\overline{E}_n(B)|<\varepsilon/3$$
holds simultaneously for all axis-parallel rectangles $B$ whose coordinates are integer multiples of $1/n$.

For each such $B$ with coordinates that are integer multiples of $1/n$, combining Lemma~\ref{lem:transposition} and Proposition~\ref{prop:martingale} (with $z=20 \log n/n$ and $c=\varepsilon n/60 \log n$) gives that
$$|\gamma_{\overline{\mathsf{runsort}}(\pi)}(B)-\overline{E}_n(B)|<\varepsilon/3$$
with probability at least $1-e^{-\frac{\varepsilon^2 n}{14400 (\log n)^2}}$.  A union bound gives that with probability at least
$$1-n^4e^{-\frac{\varepsilon^2 n}{14400 (\log n)^2}},$$
the above inequality holds simultaneously for all $B$ with coordinates that are integer multiples of $1/n$, and taking $n$ large guarantees that this probability is at least $1-2\varepsilon/3$, as needed.
\end{proof}

This lemma tells us that it will suffice to work with the expectations $E_n(B)$.  In particular, we have reduced Theorem~\ref{thm:main} to the following more manageable-looking statement.  For each $n$, define the probability measure ${\bf R}_n$ via ${\bf R}_n(B)=E_n(B)$ for all axis-parallel rectangles $B \subseteq [0,1]^2$.

\begin{theorem}\label{thm:main-reduction}
The measures ${\bf R}_n$ converge to ${\bf R}$ as $n$ goes to infinity.
\end{theorem}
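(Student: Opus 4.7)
The plan is to decompose the sum defining $E_n(B)=(1/n)\sum p_n(i,j)$ according to whether each value $j$ is a run-start of $\pi$ or not, so that the two resulting pieces correspond to the singular and absolutely continuous parts of ${\bf R}$ respectively.

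In Section~\ref{sec:enveloping} I would handle the run-start contributions.  A short calculation gives $\mathbb{P}[j\text{ is a run-start of }\pi]=(n-j+1)/n$, and in this case the position of $j$ in $\mathsf{runsort}(\pi)$ equals $L(j)+1$, where $L(j):=\sum_{m=1}^{n}\mathbf{1}[f(m)<j]$ and $f(m)$ denotes the minimum entry of the run containing $m$.  The key observation is that for $m\ge j$, the event $\{f(m)<j\}$ asks that, as one walks leftward from the position of $m$ in $\pi$, the resulting decreasing prefix reaches a value below $j$ before being interrupted by an ascent.  Approximating this walk by i.i.d.\ uniforms on $[0,1]$ yields the integral equation $p(z)=u+\int_u^z p(w)\,dw$ with $u=j/n$, whose unique solution is $p(z)=u\,e^{z-u}$.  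Hence $\mathbb{P}[f(m)<j]\to (j/n)\,e^{(m-j)/n}$; summing over $m$ gives $\mathbb{E}[L(j)]\sim j\,e^{1-j/n}$, which, combined with an Azuma-type argument (transpositions alter $L(j)$ by $O(\log n)$, as in Lemma~\ref{lem:transposition}), shows that $L(j)$ is tightly concentrated around $j\,e^{1-j/n}$.  Summing over $j$ then gives that the run-start contribution to $E_n(B)$ is asymptotically $\int (1-y)\,\mathbf{1}[(y\,e^{1-y},y)\in B]\,dy=\int_{B\cap\mathcal{C}}(1-y)\,dy$.

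In Section~\ref{sec:interior} I would treat the non-run-start contributions.  If $j$ is not a run-start of $\pi$, its position in $\mathsf{runsort}(\pi)$ equals $L(f(j))+1+d$, where $d\ge 1$ is the offset of $j$ within its run.  Lemma~\ref{lem:loglog} gives $d=O(\log n)$ with high probability, which is negligible after scaling, so the scaled position is asymptotically $(f(j)/n)\,e^{1-f(j)/n}$.  The integral equation above shows that, conditional on $j/n=y$, the continuous part of the law of $f(j)/n$ on $(0,y)$ has density $(1-u)\,e^{y-u}$ (the atom of mass $1-y$ at $u=y$ corresponds to the already-handled run-start case).  The change of variables $x=u\,e^{1-u}$---a diffeomorphism of $[0,1]$ with $dx/du=(1-u)\,e^{1-u}$---pushes this density forward to the strikingly simple $e^{y-1}$ on $x\in(0,y\,e^{1-y})$, which is what produces the ``horizontal uniformity'' noted after the statement of Theorem~\ref{thm:main}.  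Summing over $j$ then gives that the non-run-start contribution to $E_n(B)$ is asymptotically $\int_{B\cap\mathcal{C}^+} e^{y-1}\,dx\,dy$.

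Section~\ref{sec:everything} would then combine the two contributions to conclude $E_n(B)\to{\bf R}(B)$ for every axis-parallel rectangle $B$.  I expect the principal technical obstacle to be quantifying the concentration of $L(j)$ sharply and uniformly in $j$, so that the singular mass lands tightly on $\mathcal{C}$ rather than being smeared over a neighborhood; this should be addressable by a tailored martingale estimate combined with a union bound over $j$.  A secondary concern is the boundary effect where $\partial B$ meets $\mathcal{C}$, but since $\mathcal{C}$ is strictly monotone, $\partial B$ intersects it in at most finitely many points, and only $o(n)$ values of $j$ lie within the fluctuation scale of those points, so such effects contribute $o(1)$ to $E_n(B)$.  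Together these estimates deliver the claimed convergence ${\bf R}_n\to{\bf R}$.
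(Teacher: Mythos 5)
Your proposal is correct and reaches the right permuton, but by a genuinely different route in the key interior step. For the run-starts you and the paper compute the same expectation (your $L(j)=\sum_m\mathbf{1}[f(m)<j]$ equals $\sum_{a<j}X_a$ in the notation of Proposition~\ref{prop:L-expectation}), so the analysis of the singular part agrees. The divergence is in Section~\ref{sec:interior}: the paper derives the recursion \eqref{eqn:recurrence} by inserting the value $j$ at a random position into a random permutation of $[n]\setminus\{j\}$, unrolls it to an alternating binomial sum, and then extracts $e^{y-1}/n$; the independence of $x$ simply emerges from the formula, and the introduction even remarks that the authors know no direct reason for this ``horizontal uniformity.'' Your route supplies such a reason: the conditional density of $f(j)/n$ on $(0,y)$ is $(1-u)e^{y-u}$, and the factor $(1-u)e^{1-u}$ cancels against the Jacobian of $u\mapsto ue^{1-u}$, leaving $e^{y-1}$. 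This is more conceptual, and it reuses the enveloping-curve machinery rather than introducing a second, independent set of estimates. To make it airtight you would need to (i) control the error in replacing the leftward decreasing walk by i.i.d.\ uniforms, which is routine once Lemma~\ref{lem:loglog} rules out long walks, and (ii) argue that conditioning on $f(j)=a$ does not spoil the concentration of $L(a)$ around $ae^{1-a/n}$---this holds because the conditioning event depends on only $O(\log n)$ positions, an argument in the spirit of Lemma~\ref{lem:transposition-2}, followed by a union bound over $a$. Finally, the paper closes Section~\ref{sec:everything} with a tiling trick (prove $\liminf_n{\bf R}_n(B)\geq{\bf R}(B)$ for every rectangle, then use total mass one on both sides to force equality), which sidesteps any boundary analysis; your direct handling of $\partial B\cap\mathcal{C}$ also works, since the singular mass sits in a strip of width $O((\log n)^2/\sqrt n)=o(1)$ around $\mathcal{C}$, but the tiling argument is tidier.
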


\section{The Enveloping Curve}\label{sec:enveloping}

In this section, we address the ``singular'' behavior that comes from the first entries of runs; as mentioned in the Introduction, this corresponds to the mass in ${\bf R}$ that lies on the curve $\mathcal{C}$.

For $y \in [0,1]$ and $\pi \in S_n$, we let $L_{\pi}(y)$ denote the largest position $i \in [n]$ such that $\pi_i \leq yn$ if $y \geq 1/n$, and we define $L_\pi(y)=0$ if $y<1/n$.  In other words, $L_{\pi}(y)$ is the smallest $i$ such that all of the entries up to $yn$ appear in the first $i$ positions.  Note that the curve $x=L_\pi(y)$ is the lower envelope of the scaled plot of $\pi$.  We start by computing the expected value of $L_{\mathsf{runsort}(\pi)}(y)$ and $L_{\overline{\mathsf{runsort}}(\pi)}(y)$.

\begin{proposition}\label{prop:L-expectation}
Fix $y\in[0,1]$, and suppose that $\pi \in S_n$ is chosen uniformly at random.  Then $$\mathbb E[L_{\mathsf{runsort}(\pi)}(y)]= nye^{1-y}+O(\log n)$$
and $$\mathbb E[L_{\overline{\mathsf{runsort}}(\pi)}(y)]= nye^{1-y}+O(\log n).$$
\end{proposition}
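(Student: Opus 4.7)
\emph{Proof plan.} The plan is to introduce an auxiliary quantity $N(y)$ that approximates $L_{\mathsf{runsort}(\pi)}(y)$ up to an $O(\log n)$ error, and then to compute $\mathbb{E}[N(y)]$ via a local analysis. Specifically, I would let $N(y)$ denote the total number of entries of $\pi$ belonging to runs whose first entry is at most $yn$. In $\mathsf{runsort}(\pi)$ these runs appear first (since they are sorted by first entry), and within each such run the entries $\le yn$ precede the entries $> yn$ because runs are ascending. Therefore $0 \le N(y) - L_{\mathsf{runsort}(\pi)}(y) \le$ (length of the longest run of $\pi$), and a standard union bound $\mathbb{P}[\text{some run of }\pi\text{ has length} \ge T] \le n/T!$ gives $\mathbb{E}[\text{longest run}] = O(\log n)$. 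It therefore suffices to prove $\mathbb{E}[N(y)] = nye^{1-y} + O(1)$.

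To compute $\mathbb{E}[N(y)] = \sum_{j=1}^n q_j(y)$, where $q_j(y)$ is the probability that the run of $\pi$ containing position $j$ has first entry at most $yn$, I would fix $j$ and condition on the \emph{look-back length} $K$: the largest $k \ge 0$ such that $\pi_{j-k} < \cdots < \pi_j$ and either $j-k = 1$ or $\pi_{j-k-1} > \pi_{j-k}$. A pattern count on positions $j-k-1, \ldots, j$ shows $\mathbb{P}[K=k] = (k+1)/(k+2)!$ for $0 \le k \le j-2$, with a boundary term $\mathbb{P}[K=j-1] = 1/j!$. Moreover, conditional on $K = k$, the value $\pi_{j-k}$ is the minimum of a uniformly random $(k+2)$-subset of $[n]$, so
\[\mathbb{P}[\pi_{j-k} \le yn \mid K = k] = 1 - \binom{n - \lfloor yn \rfloor}{k+2}\Big/\binom{n}{k+2} = 1 - (1-y)^{k+2} + O\!\bigl((k+2)^2/n\bigr).\]

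The key calculation is then the series identity
\[\sum_{k=0}^{\infty} \frac{k+1}{(k+2)!}\bigl(1 - (1-y)^{k+2}\bigr) = y\,e^{1-y},\]
which follows from $(k+1)/(k+2)! = 1/(k+1)! - 1/(k+2)!$ together with the Taylor series for $e$ and $e^{1-y}$. After establishing this identity, the truncation to $k \le j-2$, the boundary term $1/j!$, and the discretization error $O((k+2)^2/n)$ each contribute only $O(1)$ in total after summing over $j$, yielding $\mathbb{E}[N(y)] = nye^{1-y} + O(1)$ and hence $\mathbb{E}[L_{\mathsf{runsort}(\pi)}(y)] = nye^{1-y} + O(\log n)$. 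For the $\overline{\mathsf{runsort}}$ statement I would invoke Lemma~\ref{lem:loglog}: since $\mathsf{runsort}(\pi) = \overline{\mathsf{runsort}}(\pi)$ with probability at least $1 - n^{-(\log\log n)/2}$, and the two $L$-values always differ by at most $n$, the expectations differ by $o(1)$, so the same asymptotic holds. The main obstacle is the pattern-counting argument and the series identity in the middle step; the subsequent error bookkeeping is routine.
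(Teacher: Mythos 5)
Your proposal is correct, and it reaches the result by a route that is organized differently from the paper's. Both arguments reduce to the same auxiliary quantity: your $N(y)$ (number of entries lying in runs whose first entry is $\le yn$) is exactly the paper's $\sum_{j\le yn}X_j(\pi)$, where $X_j(\pi)$ is the length of the run beginning at value $j$ (or $0$ if $j$ is not a run start), and in both cases $0\le N(y)-L_{\mathsf{runsort}(\pi)}(y)\le \max\text{-run-length}$ handles the $O(\log n)$ slack. Where you diverge is in computing $\mathbb E[N(y)]$. The paper sums over \emph{values} $j\le yn$, estimates $\mathbb P[X_j\ge k+1]\approx (1-j/n)^{k+1}/k!$ by looking forward from the position of $j$, obtains $\mathbb E[X_j]\approx (1-j/n)e^{1-j/n}$, and then needs a Riemann-sum step $\sum_{j\le yn}(1-j/n)e^{1-j/n}\approx n\int_0^y(1-t)e^{1-t}\,dt=nye^{1-y}$. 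You instead sum over \emph{positions} $j$, condition on the look-back length $K$ with $\mathbb P[K=k]=(k+1)/(k+2)!$, use that the run's first entry is then the minimum of a uniform $(k+2)$-subset, and obtain a per-position probability $q_j(y)\approx ye^{1-y}$ that is essentially independent of $j$; the closed form comes from the telescoping series identity $\sum_{k\ge 0}\frac{k+1}{(k+2)!}\bigl(1-(1-y)^{k+2}\bigr)=ye^{1-y}$ rather than from an integral. This is arguably cleaner (no integral step, and the position-uniformity of $q_j(y)$ is a pleasant feature), though the two calculations are equivalent after exchanging the order of summation. Your treatment of the boundary term $1/j!$, the discretization error $O((k+2)^2/n)$, the truncation tail, and the $\overline{\mathsf{runsort}}$ statement via Lemma~\ref{lem:loglog} all check out.
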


\begin{proof}
We start with the first statement.  For $j\in [n]$, define the random variable $X_j$ by \[X_j(\pi)=\begin{cases} 0, & \mbox{if $j$ is not the beginning of a run of $\pi$} ; \\ k, & \mbox{if $j$ is the beginning of a run of length $k$ of $\pi$}. \end{cases}\] It follows from the definition of $\mathsf{runsort}$ that \[\sum_{j\leq yn}X_j(\pi)-m(\pi)<L_{\mathsf{runsort}(\pi)}(y)\leq\sum_{j\leq yn}X_j(\pi),\] where $m(\pi)$ is the maximum length of a run in $\pi$. We know from Lemma~\ref{lem:loglog} that $m(\pi)\leq\log n$ with probability at least $1-n^{-(\log\log n)/2}$. Since $m(\pi)\leq n$ for all $\pi\in S_n$,
we have \[\mathbb E[L_{\mathsf{runsort}(\pi)}(y)]\geq \sum_{j\leq yn}\mathbb E[X_j(\pi)]-(1-n^{-(\log\log n)/2})\log n-n\cdot n^{-(\log\log n)/2}.\] Hence, \[\mathbb E[L_{\mathsf{runsort}(\pi)}(y)]=\sum_{j\leq yn}\mathbb E[X_j(\pi)]+O(\log n).\]

Consider $j\in[n]$ with $j \leq yn$, and let $r$ be such that $\pi_r=j$.  We wish to estimate the probability that $X_j(\pi) \geq k+1$ for each $k \geq 0$.  If $k>\log n$, then $\mathbb P[X_j(\pi)\geq k+1]<n^{-(\log\log n)/2}$ by Lemma~\ref{lem:loglog} (and this contribution will turn out to be negligible).  To handle the case $k\leq\log n$, note that we have $X_j(\pi) \geq k+1$ if and only if the following both hold:
\begin{itemize}
\item Either $r=1$, or $2 \leq r \leq n-k$ and $\pi_{r-1}>j$;
\item $j<\pi_{r+1}< \pi_{r+2}<\cdots < \pi_{r+k}$.
\end{itemize}
Therefore, \[\mathbb{P}[X_j(\pi) \geq k+1] =\frac{1}{n \cdot k!} \cdot \frac{\binom{n-j}{k}}{\binom{n-1}{k}}+\frac{n-k-1}{n \cdot k!} \cdot \frac{\binom{n-j}{k+1}}{\binom{n-1}{k+1}}.\]
Note that the first term is at most $1/n$. To estimate the second term, we write
\begin{align*}
\frac{\binom{n-j}{k+1}}{\binom{n-1}{k+1}} &=\prod_{t=0}^k\frac{n-j-t}{n-1-t}\\
 &=\prod_{t=0}^k\left(1-\frac{j}{n}+\frac{n-j-jt}{n(n-1-t)}\right)\\
 &=\left(1-\frac{j}{n}+O\left(\frac{t+1}{n}\right)\right)^{k+1}\\
 &=(1-j/n)^{k+1}+O((k+1)^2/n),
\end{align*}
where the last bound uses the fact that $k \leq \log n$.
Combining these estimates yields
\begin{align*}
\mathbb{P}[X_j(\pi) \geq k+1] &=O(1/n)+\frac{1}{k!} (1-(k+1)/n)\left((1-j/n)^{k+1}+O((k+1)^2/n)\right)\\
 &=\frac{(1-j/n)^{k+1}}{k!}+O(1/n),
\end{align*}
again using $k\leq \log n$.  So
\begin{align*}
\mathbb{E}[X_j(\pi)] &=\sum_{k=0}^{n-1} \mathbb{P}[X_j(\pi) \geq k+1]\\
 &=\sum_{k=0}^{\left\lfloor\log n\right\rfloor} \left( \frac{(1-j/n)^{k+1}}{k!}+O(1/n) \right)+O\left(n \cdot n^{-(\log\log n)/2} \right)\\
 &=(1-j/n)(e^{1-j/n}+O(1/\left\lfloor\log n\right\rfloor!))+O(\log n/n)+O\left(n \cdot n^{-(\log\log n)/2} \right)\\
  &=(1-j/n)e^{1-j/n}+O(\log n/n).
\end{align*}
Summing over $j \leq yn$ gives
\begin{align*}
\sum_{j \leq yn} \mathbb{E}[X_j(\pi)] &=\sum_{j \leq yn} \left( (1-j/n)e^{1-j/n}+O(\log n/n) \right)\\
 &=n\int_{0}^{y}(1-t)e^{1-t} \,dt+O(1)+O(\log n)\\
  &=nye^{1-y}+O(\log n),
\end{align*}
and we conclude that $$\mathbb E[L_{\mathsf{runsort}(\pi)}(y)]= nye^{1-y}+O(\log n).$$

For the statement about $\mathbb E[L_{\overline{\mathsf{runsort}}(\pi)}(y)]$, note that $L_{\overline{\mathsf{runsort}}(\pi)}(y)=L_{\mathsf{runsort}(\pi)}(y)$ with probability at least $1-n^{-(\log \log n)/2}$ by Lemma~\ref{lem:loglog}; when these quantities do differ, they differ by at most $n$, and $n\cdot n^{-(\log \log n)/2}$ can be absorbed into the error term.
\end{proof}

In order to apply Proposition~\ref{prop:martingale}, we need an analogue of Lemma~\ref{lem:transposition}.  As in the previous section, it is more convenient to work with $\overline{\mathsf{runsort}}$.

\begin{lemma}\label{lem:transposition-2}
Fix $y \in [0,1]$.  Let $\pi \in S_n$ be a permutation, and let $\pi'=\pi \circ (i_1 \, i_2)$ be the permutation obtained from $\pi$ by swapping the entries in positions $i_1$ and $i_2$.  Then \[|L_{\overline{\mathsf{runsort}}(\pi)}(y)-L_{\overline{\mathsf{runsort}}(\pi')}(y)| \leq 9\log n.\]
\end{lemma}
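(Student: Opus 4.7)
The plan is to mirror the strategy of the proof of Lemma~\ref{lem:transposition}, now tracking $L$ instead of $\gamma(B)$.  The essential structural input is the one already established there: applying the transposition $(i_1\,i_2)$ leaves all but at most $4$ of the segments $A_{k,\ell}$ unchanged.  This gives a natural matching between the segments of $\pi$ and those of $\pi'$ in which at most $4$ segments on each side are unmatched, and since every segment has length at most $\lfloor\log n\rfloor$, the set of ``affected'' positions has cardinality at most $4\lfloor\log n\rfloor$ (and this set is the same whether viewed from $\pi$ or from $\pi'$).

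I would then write $L_{\overline{\mathsf{runsort}}(\pi)}(y)$ in a form that isolates the affected part.  In $\overline{\mathsf{runsort}}(\pi)$ the segments are concatenated in lexicographic order, and since each segment is increasing with its first entry equal to its minimum, a segment $S$ contains an entry $\le yn$ if and only if $\mathrm{first}(S)\le yn$.  Assuming $y\ge 1/n$ (the other case is trivial), let $S^{\ast}$ be the segment with the largest first entry still at most $yn$.  Then the last entry $\le yn$ in $\overline{\mathsf{runsort}}(\pi)$ lies inside $S^{\ast}$, giving
\[
L_{\overline{\mathsf{runsort}}(\pi)}(y)\;=\;M(\pi,y)-Q(\pi,y),
\]
where $M(\pi,y):=\sum_{S:\,\mathrm{first}(S)\le yn}|S|$ is the total length of all segments with first entry at most $yn$, and $Q(\pi,y)$ is the number of entries of $S^{\ast}$ strictly exceeding $yn$, so $0\le Q(\pi,y)<\log n$.

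With this decomposition in hand, the estimate reduces to bounding $|M(\pi,y)-M(\pi',y)|$ and absorbing the two $Q$ terms.  Matched segments contribute identically to $M(\pi,y)$ and $M(\pi',y)$ and hence cancel, so the difference is determined entirely by the unmatched segments on each side, each of whose total length is at most $4\lfloor\log n\rfloor$.  This yields $|M(\pi,y)-M(\pi',y)|\le 4\lfloor\log n\rfloor$, and combining with the automatic bound $Q(\pi,y)+Q(\pi',y)<2\log n$ gives $|L_{\overline{\mathsf{runsort}}(\pi)}(y)-L_{\overline{\mathsf{runsort}}(\pi')}(y)|\le 6\log n$, comfortably within the claimed $9\log n$.

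The proof is largely bookkeeping, so there is no serious obstacle.  The only mildly delicate point is that $S^{\ast}$ itself may be among the affected segments, in which case its identity and length can differ between $\pi$ and $\pi'$; the virtue of the $M-Q$ decomposition is precisely that it absorbs such variation into the two automatically bounded terms $Q(\pi,y)$ and $Q(\pi',y)$, so no case analysis is required.
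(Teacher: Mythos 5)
Your proof is correct, and it takes a genuinely cleaner route than the paper while resting on the same structural input (the fact, borrowed from the proof of Lemma~\ref{lem:transposition}, that a transposition leaves all but at most four segments on each side unchanged). The paper's proof tracks how far individual entries can move: entries in unaffected segments shift position by at most $4\log n$, and the possibly missing small entries in affected segments are then chased down by hand, yielding $L + 4\log n + \log n + 4\log n = L + 9\log n$. Your decomposition $L_{\overline{\mathsf{runsort}}(\pi)}(y) = M(\pi,y) - Q(\pi,y)$ sidesteps this bookkeeping entirely: since segments are increasing and hence $\mathrm{first}(S)=\min(S)$, a segment contributes to the prefix of entries $\leq yn$ iff its first entry is $\leq yn$, so $M$ is a clean sum over segments; matched segments contribute identically and cancel, unmatched ones contribute at most $4\lfloor\log n\rfloor$ in total on each side, and $Q$ is trivially below $\log n$. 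This gives the improved bound $6\log n$, comfortably within the stated $9\log n$, and the argument localizes the only delicacy (what happens when $S^{\ast}$ itself is affected) into the automatically bounded $Q$-terms, exactly as you observe. One small point worth flagging if you wrote this out formally: the ``at most four affected segments'' claim is stated for the segments of $\pi$, so you should invoke it once more with $\pi$ and $\pi'$ interchanged to justify that at most four segments of $\pi'$ are new; this is immediate by symmetry since $\pi = \pi' \circ (i_1\,i_2)$, but it is needed for the matching you set up.
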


\begin{proof}
Write $\{A_{k, \ell}\}$ and $\{A'_{k, \ell}\}$ for the sets of segments of $\pi$ and $\pi'$, respectively.  As in the proof of Lemma~\ref{lem:transposition}, we note that multiplying $\pi$ on the right by the transposition $(i_1 \, i_2)$ affects at most four of the segments $A_{k, \ell}$, which together contain at most $4 \log n$ entries.  In particular, for each entry $j$ not in one of these affected segments, the horizontal positions of $j$ in $\overline{\mathsf{runsort}}(\pi)$ and $\overline{\mathsf{runsort}}(\pi')$ differ by at most $4 \log n$.  Hence, the first $L_{\overline{\mathsf{runsort}}(\pi)}(y)+4 \log n$ entries of $\overline{\mathsf{runsort}}(\pi')$ certainly contain all of the entries up to $yn$ except for possibly some of the $4 \log n$ entries in the affected segments.  These missing small entries (if any exist) are contained in at most four segments $A'_{k,\ell}$ of $\pi'$, and these segments must appear in $\overline{\mathsf{runsort}}(\pi')$ directly after the last run that includes all smaller entries.  Since all runs have length at most $\log n$, we see that all of the entries up to $yn$ are among the first
$$L_{\overline{\mathsf{runsort}}(\pi)}(y)+4 \log n+\log n+4\log n=L_{\overline{\mathsf{runsort}}(\pi)}(y)+9 \log n$$
entries of $\overline{\mathsf{runsort}}(\pi')$; that is, $L_{\overline{\mathsf{runsort}}(\pi')}(y) \leq L_{\overline{\mathsf{runsort}}(\pi)}(y)+9 \log n$.  By the same argument, we have $L_{\overline{\mathsf{runsort}}(\pi)}(y) \leq L_{\overline{\mathsf{runsort}}(\pi')}(y)+ 9 \log n$.
\end{proof}

Following the same strategy as in the previous section, we obtain a concentration inequality for $L_{\mathsf{runsort}(\pi)}(y)$.

\begin{lemma}\label{lem:curve-concentration}
Fix $y \in [0,1]$, and suppose that $\pi \in S_n$ is chosen uniformly at random.  Then with probability at least $1-2n^{-(\log \log n)/2}$, we have the concentration inequality
$$|L_{\mathsf{runsort}(\pi)}(y)-nye^{1-y}|=O(\sqrt{n} (\log n)^2).$$
\end{lemma}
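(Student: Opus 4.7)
The strategy is to deduce the concentration of $L_{\mathsf{runsort}(\pi)}(y)$ from the corresponding statement for $L_{\overline{\mathsf{runsort}}(\pi)}(y)$, following the template already used in the proof of Lemma~\ref{lem:concentration}. The three ingredients are in place: Proposition~\ref{prop:L-expectation} controls the expectation, Lemma~\ref{lem:transposition-2} provides the Lipschitz constant needed to apply the martingale inequality, and Lemma~\ref{lem:loglog} allows us to pass from $\overline{\mathsf{runsort}}$ back to $\mathsf{runsort}$.

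More concretely, I would apply Proposition~\ref{prop:martingale} to the function $f(\pi) = L_{\overline{\mathsf{runsort}}(\pi)}(y)$. By Lemma~\ref{lem:transposition-2}, this $f$ satisfies the transposition-Lipschitz hypothesis with $z = 9\log n$. Setting $c = \sqrt{n}\log n$, the proposition yields
\[
\mathbb{P}\!\left[\,\big|L_{\overline{\mathsf{runsort}}(\pi)}(y) - \mathbb{E}[L_{\overline{\mathsf{runsort}}(\pi)}(y)]\big| \geq 9\sqrt{n}(\log n)^2\right] \leq 2e^{-(\log n)^2/4}.
\]
For $n$ sufficiently large, $(\log n)^2/4 \geq (\log n)(\log \log n)/2$, so this failure probability is at most $n^{-(\log \log n)/2}$.

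Proposition~\ref{prop:L-expectation} then allows me to replace $\mathbb{E}[L_{\overline{\mathsf{runsort}}(\pi)}(y)]$ by $nye^{1-y}$ at the cost of an additive $O(\log n)$ error, which is dominated by $O(\sqrt{n}(\log n)^2)$. This yields
\[
|L_{\overline{\mathsf{runsort}}(\pi)}(y) - nye^{1-y}| = O(\sqrt{n}(\log n)^2)
\]
with probability at least $1 - n^{-(\log \log n)/2}$. Finally, Lemma~\ref{lem:loglog} gives $\mathsf{runsort}(\pi) = \overline{\mathsf{runsort}}(\pi)$ (and hence $L_{\mathsf{runsort}(\pi)}(y) = L_{\overline{\mathsf{runsort}}(\pi)}(y)$) with probability at least $1 - n^{-(\log \log n)/2}$, so a union bound over the two bad events produces the desired total failure probability of $2n^{-(\log\log n)/2}$.

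The argument is essentially a bookkeeping exercise, so I do not expect any serious obstacle; the only point requiring a bit of care is choosing the martingale parameter $c$ so that the exponential tail from Proposition~\ref{prop:martingale} beats the polynomial-in-$\log\log n$ target $n^{-(\log \log n)/2}$ while keeping the deviation $cz$ of order $\sqrt{n}(\log n)^2$. The choice $c = \sqrt{n}\log n$ threads this needle cleanly, as shown above.
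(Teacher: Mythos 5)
Your proof is correct and follows the same route as the paper's: apply Proposition~\ref{prop:martingale} to $L_{\overline{\mathsf{runsort}}(\pi)}(y)$ using the Lipschitz constant from Lemma~\ref{lem:transposition-2}, substitute the expectation from Proposition~\ref{prop:L-expectation}, and transfer back to $\mathsf{runsort}$ via Lemma~\ref{lem:loglog}. The only difference is a cosmetic one in the choice of $c$ (you use $\sqrt{n}\log n$ where the paper uses $2\sqrt{n}\log n$) and in where you compare the exponential tail to $n^{-(\log\log n)/2}$, neither of which changes the argument.
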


\begin{proof}
Applying Proposition~\ref{prop:martingale} with $z=9 \log n$ and $c=2\sqrt{n}\log n$ gives that with probability at least $1-2e^{-(\log n)^2}$, the quantity $L_{\overline{\mathsf{runsort}}(\pi)}(y)$ differs from its expectation by at most $18\sqrt{n} (\log n)^2$.  Plugging in the estimate from Proposition~\ref{prop:L-expectation}, we have that
$$|L_{\overline{\mathsf{runsort}}(\pi)}-nye^{1-y}|=O(\sqrt{n} (\log n)^2)$$
with probability at least $1-2e^{-(\log n)^2}$.  Since $\mathsf{runsort}(\pi)=\overline{\mathsf{runsort}}(\pi)$ with probability at least $1-n^{-(\log \log n)/2}$ (by Lemma~\ref{lem:loglog}), we see that in fact $|L_{\overline{\mathsf{runsort}}(\pi)}-nye^{1-y}|=O(\sqrt{n} (\log n)^2)$ holds with probability at least
\[1-2e^{-(\log n)^2}-n^{-(\log \log n)/2}>1-2n^{-(\log \log n)/2}. \qedhere\]
\end{proof}

When the entry $j=yn$ is the beginning of a run of $\pi$, the position of $j$ in $\mathsf{runsort}(\pi)$ is precisely $L_{\mathsf{runsort}(\pi)}(y)$.  Thus, the previous lemma tells us that in the scaled plot of $\mathsf{runsort}(\pi)$, the beginnings of runs cluster around the curve $\mathcal{C}$.  To make this precise, let $q_n(i,j)$ denote the probability that the entry $j$ is the beginning of a run of $\pi$ and is in the $i$-th position of $\mathsf{runsort}(\pi)$ when $\pi \in S_n$ is chosen uniformly at random.  (Here, $q_n(i,j)$ differs from $p_n(i,j)$ in that the former looks at only the first entry of each run and the latter looks at all entries.)  We obtain an estimate on the distribution of $q_n(i,j)$ for fixed $j$.

\begin{lemma}\label{lem:singular-part}
There exists a constant $C>0$ such that the following holds for all $y \in [0,1]$:
$$\left|\sum_{|i-nye^{1-y}| \leq C\sqrt{n} (\log n)^2}q_n(i,yn)-(1-y+1/n)\right|\leq 2n^{-(\log \log n)/2}$$
and
$$\sum_{|i-nye^{1-y}|>C\sqrt{n} (\log n)^2}q_n(i,yn)\leq 2n^{-(\log \log n)/2}.$$
\end{lemma}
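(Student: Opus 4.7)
The plan is to reduce both inequalities to the concentration bound of Lemma~\ref{lem:curve-concentration} via the observation (already noted just before the lemma statement) that whenever the entry $yn$ is the beginning of a run of $\pi$, its position in $\mathsf{runsort}(\pi)$ equals $L_{\mathsf{runsort}(\pi)}(y)$.  In particular, $q_n(i,yn)$ is precisely the probability that $yn$ starts a run of $\pi$ and $L_{\mathsf{runsort}(\pi)}(y) = i$, so the two sums in the lemma partition the total mass $\sum_{i=1}^n q_n(i,yn) = \mathbb P[yn \text{ starts a run of } \pi]$ according to whether or not $L_{\mathsf{runsort}(\pi)}(y)$ lies within the prescribed window.

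The first step would be to compute this total mass directly.  Splitting by the position $r$ of $yn$ in $\pi$, the contribution from $r=1$ is $\mathbb P[\pi_1 = yn] = 1/n$, while for $r \geq 2$ the event also requires $\pi_{r-1} > yn$ and contributes $(n-1) \cdot \tfrac{1}{n} \cdot \tfrac{n-yn}{n-1} = (n-yn)/n$.  Adding these yields
\[
\sum_{i=1}^n q_n(i,yn) \;=\; \frac{n - yn + 1}{n} \;=\; 1 - y + 1/n.
\]

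Using the identification above, the second inequality is immediate from the chain
\[
\sum_{|i - nye^{1-y}| > C\sqrt{n}(\log n)^2} q_n(i,yn) \;\leq\; \mathbb P\bigl[|L_{\mathsf{runsort}(\pi)}(y) - nye^{1-y}| > C\sqrt{n}(\log n)^2\bigr] \;\leq\; 2n^{-(\log \log n)/2},
\]
where the final bound is Lemma~\ref{lem:curve-concentration} with $C$ chosen large enough to absorb the implicit constant there.  The first inequality then follows by subtraction, since the total mass is $1 - y + 1/n$ and the tail sum is at most $2n^{-(\log \log n)/2}$, forcing the main-region sum to lie within $2n^{-(\log \log n)/2}$ of $1 - y + 1/n$.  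There is no serious obstacle: all of the probabilistic work has already been done in Lemma~\ref{lem:curve-concentration}, and this lemma is essentially a repackaging of that concentration result together with the elementary total-mass computation.  The only point worth noting is that the constant $C$ can be chosen uniformly in $y \in [0,1]$ because the hidden constant in Lemma~\ref{lem:curve-concentration} is itself uniform in $y$.
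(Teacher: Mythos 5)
Your proposal is correct and follows essentially the same route as the paper: the paper likewise just recalls that $\sum_{i} q_n(i,yn) = 1-y+1/n$ (the probability that $yn$ begins a run), uses the identification of the position of a run-start with $L_{\mathsf{runsort}(\pi)}(y)$, and then derives both inequalities directly from Lemma~\ref{lem:curve-concentration}. You fill in slightly more detail (explicitly computing the total mass and spelling out the subtraction step), but there is no difference in substance.
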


\begin{proof}
Recall that the entry $yn$ is the beginning of a run of $\pi$ with probability $(n-yn+1)/n$. This implies that $\sum_{i \in [n]}q_n(i,yn)=1-y+1/n$.  Both statements now follow from Lemma~\ref{lem:curve-concentration}.
\end{proof}

Let us summarize in words what this lemma tells us about the contribution to the measures ${\bf R}_n$ (and eventually also $\bf R$) from the beginnings of runs: This contribution is concentrated close to the curve $\mathcal{C}$, and the ``weighting'' in the $y$-direction is $(1-y) \, dy$.  We will make these observations precise in Section~\ref{sec:everything}.

Finally, we record a version of this result that will be convenient in the next section.

\begin{lemma}\label{lem:q-in-range}
There exists a constant $C>0$ such that the following holds for all $y \in [0,1]$: If $\ell >C\sqrt{n} (\log n)^2$ and $k \geq yn$ are integers, then $q_n(nye^{1-y}-\ell,k) \leq 2n^{-(\log \log n)/2}.$
\end{lemma}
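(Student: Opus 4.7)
The plan is to reduce Lemma~\ref{lem:q-in-range} directly to the concentration estimate for $L_{\mathsf{runsort}(\pi)}$ obtained in Lemma~\ref{lem:curve-concentration}. The key observation, already recorded just before Lemma~\ref{lem:singular-part}, is that whenever $k$ is the first entry of a run of $\pi$, the position of $k$ in $\mathsf{runsort}(\pi)$ coincides with $L_{\mathsf{runsort}(\pi)}(k/n)$. Since $q_n(i,k)$ imposes precisely the two conditions that $k$ be the first entry of a run of $\pi$ and that $k$ appear at position $i$ in $\mathsf{runsort}(\pi)$, this observation yields the pointwise bound
\[q_n(i,k)\leq\mathbb{P}[L_{\mathsf{runsort}(\pi)}(k/n)=i].\]

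Next, I would use the monotonicity of $y\mapsto ye^{1-y}$ on $[0,1]$ (its derivative equals $(1-y)e^{1-y}\geq 0$) together with the hypothesis $k\geq yn$ to conclude that $ke^{1-k/n}\geq nye^{1-y}$. Hence, for $i=nye^{1-y}-\ell$, we have $ke^{1-k/n}-i\geq \ell>C\sqrt{n}(\log n)^2$, which places $i$ outside the concentration window promised by Lemma~\ref{lem:curve-concentration} applied with $y$ replaced by $k/n$.

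Applying Lemma~\ref{lem:curve-concentration} in this way, and taking $C$ to be at least the implicit constant appearing there, gives $\mathbb{P}[L_{\mathsf{runsort}(\pi)}(k/n)=i]\leq 2n^{-(\log\log n)/2}$. Combined with the pointwise bound above, this completes the proof. I do not anticipate a substantive obstacle: the lemma is essentially an immediate corollary of Lemma~\ref{lem:curve-concentration}, with the monotonicity of $ye^{1-y}$ providing the one-sided comparison that allows one to bound $i$ away from $ke^{1-k/n}$ using only the weaker hypothesis $k\geq yn$.
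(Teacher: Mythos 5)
Your proof is correct and follows essentially the same route as the paper: both reduce to Lemma~\ref{lem:curve-concentration} via the observation that a run-leading entry $k$ lands at position $L_{\mathsf{runsort}(\pi)}(k/n)$. The one small difference is that the paper uses the monotonicity of $y'\mapsto L_{\mathsf{runsort}(\pi)}(y')$ (applying Lemma~\ref{lem:curve-concentration} at $y$), whereas you use the monotonicity of $t\mapsto te^{1-t}$ on $[0,1]$ (applying Lemma~\ref{lem:curve-concentration} at $k/n$); these are parallel one-line observations and either works.
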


\begin{proof}
This follows from Lemma~\ref{lem:curve-concentration} and the observation that $y \leq y'$ implies that $L_{\mathsf{runsort}(\pi)}(y) \leq L_{\mathsf{runsort}(\pi)}(y')$ for every $\pi$.
\end{proof}

\section{The Interior Density}\label{sec:interior}
We now compute the limiting density for the non-singular part of the permuton.  Suppose that $\pi$ is a random permutation of length $n$.  Recall that $p_n(i,j)$ denotes the probability that the entry $j$ is in the $i$-th position of $\mathsf{runsort}(\pi)$.  In the previous section, we analyzed the case where the entry $j$ is the beginning of a run of $\pi$, so we focus on the remaining case: Let $p'_n(i,j)=p_n(i,j)-q_n(i,j)$ denote the probability that the entry $j$ is in the $i$-th position of $\mathsf{runsort}(\pi)$ and is not the beginning of a run of $\pi$.  We will be interested in the situation where $i=xn$ and $j=yn$ for fixed $x,y \in (0,1)$, where the point $(x,y)$ lies in $\mathcal{C}^+$ (i.e., strictly above $\mathcal{C}$) and $n$ tends to infinity.  By the results of the previous section, we know that $p'_n(i,j)$ is very close to $p_n(i,j)$ in this regime. Whenever we use $xn$ or $yn$ as an input for a function that takes integer values (such as $p_n$, $p_n'$, or $q_n$), we really mean $\left\lceil xn\right\rceil$ and $\left\lceil yn\right\rceil$; we simply omit the ceiling symbols to avoid a typhoon of ceiling symbols. 

Fix $n$ and $i,j \in [n]$ with $i >1$, and suppose that we obtain $\pi$ by first picking a random permutation $\pi'$ on $[n] \setminus \{j\}$ and then inserting the entry $j$ in a random position.  Let $k$ denote the entry in the $(i-1)$-th position of $\mathsf{runsort}(\pi)$.  The following two descriptions define the same event:
\begin{itemize}
\item The entry $j$ is in the $i$-th position of $\mathsf{runsort}(\pi)$ and is not the beginning of a run of $\pi$.
\item $k<j$, and the entry $j$ was inserted directly after the entry $k$ in $\pi$.
\end{itemize}
The probability of the first bullet point occurring is (by definition) $p'_n(i,j)$, and the probability of the second bullet point occurring is
$$\frac{1}{n}\sum_{1 \leq k <j}p_{n-1}(i-1,k)=\frac{1}{n}\left(1-\sum_{j \leq k \leq n-1}p_{n-1}(i-1,k) \right).$$
Putting these together, we find that
\begin{equation}\label{eqn:recurrence}
p'_n(i,j)=\frac{1}{n}\left(1-\sum_{j \leq k \leq n-1}p_{n-1}(i-1,k) \right).
\end{equation}
Since $p_n(i,j)$ and $p'_n(i,j)$ are very close, we incur a very small error if we replace $p_{n-1}(i-1,k)$ with $p'_{n-1}(i-1,k)$ on the right-hand side; we will address this carefully below.  So, up to this small error, $p'_n(i,j)$ is equal to
$$\frac{1}{n}\left(1-\sum_{j \leq k \leq n-1}p'_{n-1}(i-1,k) \right).$$
Note that we have the boundary conditions $p'_n(1,j)=0$ for all $j$ and $p'_n(i,n)=1/n$ for $1<i<n$.  We now define the quantities $\widetilde{p}_n(i,j)$ recursively via
$$\widetilde{p}_n(i,j)=\frac{1}{n}\left(1-\sum_{j \leq k \leq n-1}\widetilde{p}_{n-1}(i-1,k) \right),$$
with the same boundary conditions $\widetilde{p}_n(1,j)=0$ and $\widetilde{p}_n(i,n)=1/n$ for $1<i<n$.
We will see that $p'_n(xn,yn)$ and $\widetilde{p}_n(xn,yn)$ are very close as long as $n$ is sufficiently large (with respect to $(x,y)$).

By repeatedly applying the recurrence relation for $\widetilde{p}$, we can express $\widetilde{p}_n(i,j)$ as a sum of terms involving $\widetilde{p}_{n-i+1}(1,k)$ (for $j \leq k \leq n-i+1$) and $\widetilde{p}_{n-r}(i-r,n-r)$ (for $i-r>1$ and $j \leq k \leq n-r$), together with some constant terms:
\begin{itemize}
\item The terms $\widetilde{p}_{n-i+1}(1,k)$ vanish by our boundary conditions.
\item Each term $\widetilde{p}_{n-r}(i-r,n-r)$ is equal to $1/(n-r)$ (by our boundary conditions) and appears $\binom{n-j+r-1}{r-1}$ times (by stars and bars), always weighted by $\frac{1}{n(n-1) \cdots (n-r+1)}$ and carrying the sign $(-1)^r$.  Here, $r$ ranges from $1$ to $\min\{n-j,i-2\}$; call the latter quantity $M$.
\item Each constant term $\frac{(-1)^s}{n(n-1) \cdots (n-s)}$ appears $\binom{n-j}{s}$ times, where $s$ ranges from $0$ to $M$.
\end{itemize}

Putting everything together, we arrive at the explicit formula
$$\widetilde{p}_n(i,j)=\sum_{r=1}^M \frac{(-1)^r}{n(n-1) \cdots (n-r)}\binom{n-j+r-1}{r-1}+\sum_{s=0}^M \frac{(-1)^s}{n(n-1) \cdots (n-s)}\binom{n-j}{s}.$$
We remark that the only dependence on $i$ is contained in the value of $M$.  In the regime $i \geq n-j+2$, we see that $M$ (and hence also $\widetilde{p}_n(i,j)$) is completely independent of $i$; this is a hint of the horizontal uniformity alluded to in the Introduction.  For $i=xn$ and $j=yn$ with $n$ tending to infinity, the second sum (call it $S_2$) will contribute the main term, and the first sum (call it $S_1)$ will contribute a negligible error.  Note that in this setting, $M$ is asymptotically a positive constant multiple of $n$ (since $x,y\in(0,1)$).

We begin with the first sum.  Expanding the binomial coefficient gives
$$S_1=\sum_{r=1}^M \frac{(-1)^r}{(r-1)!}\cdot\frac{1}{n(n-1)} \cdot \frac{(n-j+r-1)(n-j+r-2) \cdots (n-j+1)}{(n-2)(n-3)\cdots (n-r)}.$$
Writing $n-j=(1-y)n$, we estimate
$$\frac{n-j+t}{n-r+t-1}=1-\frac{j}{n}+\frac{j(t-r-1)+n(r-1)}{n(n-r+t-1)}=(1-y)+O(r/n)$$
uniformly in $n,r,t$ (so the implied constant depends only on $(x,y)$).  Then
\begin{align*}S_1 &=\frac{1}{n(n-1)}\sum_{r=1}^M \frac{(-1)^r}{(r-1)!} \left((1-y)^{r-1}+O\left(r2^{r-1}/n \right)\right)\\
 &=\frac{-e^{y-1}}{n^2} +O(1/n^3),
\end{align*}
which is $o(1/n)$.

Performing the analogous computation for the second sum, we find that
\begin{align*}
S_2 &=\sum_{s=0}^M\frac{(-1)^s}{s!}\cdot\frac{1}{n}\cdot\frac{(n-j)(n-j-1)\cdots(n-j-s+1)}{(n-1)(n-2)\cdots (n-s)} \\ &=\frac{1}{n}\sum_{s=0}^M\frac{(-1)^s}{s!}((1-y)^s+O((s+1)2^{s+1}/n)) \\ &=\frac{e^{y-1}}{n}+O(1/n^2).    
\end{align*}

In summary, we have established the following proposition.

\begin{proposition}\label{prop:p-tilde}
Fix $x,y \in (0,1)$ such that $(x,y) \in \mathcal{C}^+$.  Then
$$\widetilde{p}_n(xn,yn)=\frac{e^{y-1}}{n}+O(1/n^2),$$
where the implicit constant depends only on $(x,y)$.
\end{proposition}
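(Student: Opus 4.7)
The proof has two stages: derive a closed form for $\widetilde{p}_n(i,j)$ by unrolling the recurrence, then estimate the resulting alternating sum at $i=xn$, $j=yn$.  The first stage is purely algebraic, and the second is a careful expansion argument.

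For the closed form, I would iteratively apply the recurrence
\[
\widetilde{p}_m(i,j)=\frac{1}{m}\left(1-\sum_{k=j}^{m-1}\widetilde{p}_{m-1}(i-1,k)\right)
\]
one level at a time.  After $r$ unrollings, $\widetilde{p}_n(i,j)$ becomes a nested alternating sum over weakly increasing sequences $j\le k_1\le\cdots\le k_r$ with $k_s\le n-s$.  The recursion terminates either at the boundary value $\widetilde{p}_{n-i+1}(1,\cdot)=0$ (which contributes nothing) or at $\widetilde{p}_{n-r}(i-r,n-r)=1/(n-r)$ (the case where an index is forced to its upper limit).  A stars-and-bars count gives multiplicity $\binom{n-j}{s}$ for the depth-$s$ constant contribution, and multiplicity $\binom{n-j+r-1}{r-1}$ for the boundary contribution at depth $r$; these assemble into the two sums $S_1$ and $S_2$ with cutoff $M=\min\{n-j,i-2\}$.

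For the asymptotic stage, the critical estimate is
\[
\frac{(n-j)(n-j-1)\cdots(n-j-s+1)}{(n-1)(n-2)\cdots(n-s)}=(1-y)^s+O\!\left(\frac{(s+1)^2\,2^{s}}{n}\right),
\]
proved by rewriting each single-index ratio $(n-j-t)/(n-t-1)$ as $1-y+O((s+1)/n)$ and expanding the product.  Substituting into $S_2$ produces the truncated alternating series $\frac{1}{n}\sum_{s=0}^M(-(1-y))^s/s!$, which approximates $\frac{1}{n}e^{-(1-y)}=\frac{1}{n}e^{y-1}$ with a super-polynomially small tail (because $M$ is a positive fraction of $n$) and a per-term error that sums against $1/s!$ to $O(1/n^2)$.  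The analogous manipulation for $S_1$ picks up an additional factor of $1/(n-r)$ in front, yielding $S_1=-e^{y-1}/n^2+O(1/n^3)$, which is absorbed into the error term.  Adding gives $\widetilde{p}_n(xn,yn)=e^{y-1}/n+O(1/n^2)$.

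The main technical delicacy is coordinating the two layers of approximation: the per-factor ratio expansion introduces errors that grow as $2^s$, while the truncated exponential series leaves a tail controlled by $(1-y)^M/M!$.  Both are tamed because $M$ scales linearly with $n$ --- this is where the hypotheses $x,y\in(0,1)$ and $(x,y)\in\mathcal{C}^+$ are used, since they force $x>0$ and $y<1$.  Were $M$ only logarithmic in $n$, for instance, the $2^s$ factor would defeat the factorial and the argument would break down.
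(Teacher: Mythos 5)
Your proposal is correct and follows essentially the same route as the paper: unroll the recurrence to the closed form $S_1+S_2$ with cutoff $M=\min\{n-j,i-2\}$ via stars and bars, expand the binomial ratios factor by factor to get $(1-y)^s+O(\text{poly}(s)\cdot 2^s/n)$, and sum the resulting truncated exponential series. The only cosmetic differences are a marginally weaker per-term error bound ($(s+1)^2 2^s$ versus the paper's $(s+1)2^{s+1}$, which is immaterial once divided by $s!$) and a small overstatement at the end --- the hypothesis $(x,y)\in\mathcal{C}^+$ is not actually needed for this proposition (only $x,y\in(0,1)$ is, to keep $M$ linear in $n$); the condition $x<ye^{1-y}$ is used later when comparing $\widetilde p$ to $p'$.
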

It remains to bound the difference between $\widetilde{p}$ and $p'$.  This consists of keeping track of the error terms $q=p-p'$ when we iterate the recurrence \eqref{eqn:recurrence}.  At the $\ell$-th stage, the number of terms $p_{n-\ell}$ is $\binom{(1-y)n}{\ell}$ (by stars and bars), and each such term is scaled (up to a sign) by $\frac{1}{n(n-1)\cdots (n-\ell+1)}$.  Let $Q$ be the maximum of the error terms $q$ (which are certainly nonnegative).  By the triangle inequality, we may ignore the signs of the errors, and we find that
\[|p'_n(xn,yn)-\widetilde{p}_n(xn,yn)| \leq Q \sum_{\ell=1}^{(1-y)n} \binom{(1-y)n}{\ell} \frac{1}{n(n-1)\cdots (n-\ell+1)}\leq Q \sum_{\ell=1}^{(1-y)n} \frac{1}{\ell!}=O(Q).\]
We now bound $Q$.
\begin{lemma}\label{lem:E-bound}
Fix $x,y \in (0,1)$ such that $(x,y) \in \mathcal{C}+$.  If $n$ is sufficiently large (depending on $(x,y)$), then the following holds: For all $1 \leq \ell \leq \min \{xn-1,(1-y)n\}$ and all $yn \leq k \leq n-\ell$, we have
$$q_{n-\ell}(xn-\ell,k) \leq 2 (yn)^{-(\log \log (yn))/2}.$$
\end{lemma}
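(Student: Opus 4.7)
The plan is to reduce Lemma~\ref{lem:E-bound} to Lemma~\ref{lem:q-in-range} applied to a permutation of size $N := n-\ell$. The key step is to choose a parameter $y'$ (playing the role of $y$ in Lemma~\ref{lem:q-in-range}) so that both $k \geq y'N$ and the position $xn-\ell$ lies below the curve $x = Ny'e^{1-y'}$ by a margin exceeding $C\sqrt{N}(\log N)^2$. I would take $y' := yn/N$; this lies in $[y,1]$ since $0 \leq \ell \leq (1-y)n$, and the first condition $k \geq y'N = yn$ then holds by hypothesis.

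For the second condition, I would compute the gap
\[
Ny'e^{1-y'} - (xn - \ell) = yn\,e^{1-yn/N} - xn + \ell.
\]
Reparametrizing by $u := yn/N$, so that $\ell = n - yn/u$ and $u$ ranges over $[y,1]$ as $\ell$ varies, this becomes $n\cdot h(u)$ where $h(u) := ye^{1-u} + 1 - x - y/u$. The heart of the argument is to show that $h$ is bounded below by a positive constant on $[y,1]$. I would first check the endpoint value $h(y) = ye^{1-y} - x > 0$, which is positive precisely because $(x,y) \in \mathcal{C}^+$. Then I would differentiate to obtain $h'(u) = y(u^{-2} - e^{1-u})$, and argue that $h'(u) \geq 0$ on $[0,1]$ by using the elementary inequality $u^2 e^{1-u} \leq 1$ on $[0,1]$ (the function $u \mapsto u^2 e^{1-u}$ increases on $[0,2]$ and attains the value $1$ at $u=1$). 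Hence $h$ is monotonically increasing on $[y,1]$, so $h(u) \geq h(y) = ye^{1-y} - x$ throughout. In particular, the gap is at least $n(ye^{1-y}-x)$, a positive constant multiple of $n$, which easily exceeds $C\sqrt{N}(\log N)^2$ once $n$ is large (using $N \leq n$).

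Applying Lemma~\ref{lem:q-in-range} with $n$, $y$ there replaced by $N$, $y'$ then gives $q_N(xn-\ell, k) \leq 2N^{-(\log\log N)/2}$. Finally, the hypothesis $\ell \leq (1-y)n$ forces $N = n-\ell \geq yn$, and since the function $t \mapsto t^{-(\log\log t)/2}$ is eventually decreasing in $t$, we conclude
\[
q_{n-\ell}(xn-\ell, k) \leq 2N^{-(\log\log N)/2} \leq 2(yn)^{-(\log\log(yn))/2},
\]
which is precisely the desired inequality.

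The only substantive calculation is the monotonicity of $h$, which boils down to the elementary inequality $u^2 e^{1-u} \leq 1$ on $[0,1]$, so I do not anticipate any real obstacle. The implicit ceiling/floor adjustments needed to treat $xn$, $yn$, and $Ny'e^{1-y'}$ as integer-valued positions are absorbed into the rounding convention introduced at the start of Section~\ref{sec:interior}.
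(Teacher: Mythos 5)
Your proof is correct and takes essentially the same approach as the paper: both reduce to applying Lemma~\ref{lem:q-in-range} with $n$ replaced by $N=n-\ell$ and verifying the margin condition, the only difference being the choice of the second parameter. The paper keeps $y$ unchanged, so the margin rearranges to $\varepsilon n + \ell(1-ye^{1-y}) > C\sqrt{N}(\log N)^2$ with $\varepsilon = ye^{1-y}-x$, which is immediate since $1-ye^{1-y}\geq 0$; you instead take $y'=yn/N$, which makes the vertical condition $k \geq y'N=yn$ exactly tight but requires the extra monotonicity analysis of $h(u)=ye^{1-u}+1-x-y/u$ via $u^2e^{1-u}\leq 1$, a step the paper's choice avoids.
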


\begin{proof}
We wish to apply Lemma~\ref{lem:q-in-range} with $n$ replaced by $n-\ell$.  Let $\varepsilon=ye^{1-y}-x$ (which is strictly positive).  First, we check that
\[k\geq yn\geq y(n-\ell).\]
Second, we wish to show that $xn-\ell<(n-\ell)ye^{1-y}-C\sqrt{n-\ell} (\log (n-\ell))^2$ (where $C$ is the constant from Lemma~\ref{lem:q-in-range}); this inequality rearranges to
$$\varepsilon n+\ell(1-ye^{1-y})>C \sqrt{n-\ell} (\log (n-\ell))^2.$$
The term $1-ye^{1-y}$ is nonnegative, and we see that the inequality holds as long as $n$ is sufficiently large (depending on $y$ and $\varepsilon$).  So we can apply Lemma~\ref{lem:q-in-range}, which tells us that
$$q_{n-\ell}(xn-\ell,k)\leq 2(n-\ell)^{-(\log\log(n-\ell))/2}.$$
The right-hand side is an increasing function of $\ell$, so the bound $\ell \leq (1-y)n$ gives the desired inequality.
\end{proof}
The previous lemma implies that $Q \leq 2 (yn)^{-(\log \log (yn))/2}$ (which is certainly $O(1/n^2)$) for $n$ sufficiently large, so we can deduce the main result of this section.  (For $y=1$, recall from above that $p'_n(xn,n)=1/n$.)

\begin{lemma}\label{lem:internal}
Fix $x,y \in (0,1]$ such that $(x,y) \in \mathcal{C}^+$.  Then
$$p'_n(xn,yn)=\frac{e^{y-1}}{n}+O(1/n^2),$$
where again the implied constant depends only on $(x,y)$.
\end{lemma}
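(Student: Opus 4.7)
The plan is to assemble the pieces that have already been constructed in this section. All the technical estimates are in place: Proposition~\ref{prop:p-tilde} provides the asymptotic $\widetilde{p}_n(xn,yn) = e^{y-1}/n + O(1/n^2)$; the paragraph immediately preceding the lemma establishes, by iterating the recurrence \eqref{eqn:recurrence} and applying the triangle inequality, that $|p'_n(xn,yn) - \widetilde{p}_n(xn,yn)| = O(Q)$, where $Q$ is an upper bound for the $q$-values that appear as discrepancies between $p$ and $p'$ during the iteration; and Lemma~\ref{lem:E-bound} bounds these $q$-values by $2(yn)^{-(\log\log(yn))/2}$. The proposal, then, is simply to chain these three ingredients together.

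First, I would dispose of the boundary case $y=1$: since $p'_n(xn, n) = 1/n$ was built into the boundary conditions used to define the recurrence, and $e^{1-1}/n = 1/n$, the claimed asymptotic holds exactly in this case, with no error term.

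Second, for $y \in (0,1)$, I would verify that each index $(\ell, k)$ that appears when we unfold \eqref{eqn:recurrence} down to the boundary falls within the range $1 \leq \ell \leq \min\{xn-1, (1-y)n\}$ and $yn \leq k \leq n-\ell$ required by Lemma~\ref{lem:E-bound}. This is essentially bookkeeping already implicit in the stars-and-bars count preceding the lemma: each iteration of \eqref{eqn:recurrence} decreases the first argument of $p$ by one and restricts the summation range of the second argument to values at least $j = yn$, so the indices stay in the admissible window as we recurse. Applying Lemma~\ref{lem:E-bound} then gives $Q \leq 2(yn)^{-(\log\log(yn))/2}$, which is much smaller than $1/n^2$. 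Plugging into $|p'_n(xn,yn) - \widetilde{p}_n(xn,yn)| = O(Q)$ and combining with Proposition~\ref{prop:p-tilde} produces the desired asymptotic.

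There is no real obstacle: the hard analytic work was done in Proposition~\ref{prop:p-tilde} (extracting the $e^{y-1}$ density from the explicit double sum) and in Lemma~\ref{lem:E-bound} (controlling the discrepancy $q = p - p'$ by leveraging the concentration result Lemma~\ref{lem:q-in-range}, using that $(x,y) \in \mathcal{C}^+$ means $x$ is a definite distance below the enveloping curve at height $y$). The present lemma is just the clean statement that packages these two estimates together, so I would expect the written proof to consist of two or three sentences citing the relevant results.
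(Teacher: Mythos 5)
Your proposal is correct and mirrors exactly what the paper does: the lemma has no separate displayed proof, and is presented as an immediate consequence of the chain $|p'_n(xn,yn)-\widetilde{p}_n(xn,yn)|=O(Q)$, the bound $Q\leq 2(yn)^{-(\log\log(yn))/2}=O(1/n^2)$ from Lemma~\ref{lem:E-bound}, Proposition~\ref{prop:p-tilde}, and the boundary value $p'_n(xn,n)=1/n$ for the $y=1$ case. Your bookkeeping remark about the admissible index range is also the correct (and implicit) justification for invoking Lemma~\ref{lem:E-bound} at each stage of the iteration.
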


Let us summarize in words what this lemma tells us about the contribution to the measures ${\bf R}_n$ (and eventually also ${\bf R}$) from the entries that are not the beginnings of runs: In $\mathcal{C}^+$, this contribution gives a density $e^{1-y} \,dx \,dy$ at the point $(x,y)$; note that this is independent of $x$.  We have said nothing about the contribution below $\mathcal{C}$; that this contribution is $0$ follows quickly from Lemma~\ref{lem:curve-concentration}, but in fact we will give an alternative argument in the next section.

\section{Putting Everything Together}\label{sec:everything}

We finally prove Theorem~\ref{thm:main-reduction}, which implies Theorem~\ref{thm:main}.
The main idea is that we have already accounted for 100\% of the mass of ${\bf R}$ in our discussions in the previous two sections; this means that there will not be any mass below the curve $\mathcal{C}$ and that we do not need to worry about additional contributions very close to $\mathcal{C}$ from entries that are not the beginnings of runs.

For each axis-parallel rectangle $B \subseteq [0,1]^2$, it follows from Lemmas~\ref{lem:singular-part} and~\ref{lem:internal} that 
\[\liminf_{n\to\infty} {\bf R}_n(B) \geq \int_{B\cap\,\mathcal C^+}e^{y-1}\,dx\,dy+\int_{B\cap\,\mathcal C}(1-y)\,dy={\bf R}(B).\]
Now fix an axis-parallel rectangle $B_1\subseteq [0,1]^2$. Choose a tiling of $[0,1]^2$ by axis-parallel rectangles $B_1,\ldots,B_5$ (such a tiling certainly exists). 
We have \[1=\limsup_{n\to\infty} {\bf R}_n([0,1]^2)=\sum_{a=1}^5\limsup_{n\to\infty}{\bf R}_n(B_a)\geq\sum_{a=1}^5\liminf_{n\to\infty}{\bf R}_n(B_a)\geq\sum_{a=1}^5{\bf R}(B_a)=
{\bf R}([0,1]^2)=1.\] 
These inequalities must all be equalities, so $\lim\limits_{n \to \infty}{\bf R}_n(B_1)$ exists and equals ${\bf R}(B_1)$. As $B_1$ was arbitrary, this completes the proof of Theorem~\ref{thm:main-reduction}.

\section{A Generalized Setting}\label{sec:conclusion}
In this brief concluding section, we mention a setting in which the ideas presented earlier---especially those concerning the concentration inequalities derived in Section~\ref{sec:enveloping}---still hold. 

The \emph{standardization} of a sequence of $n$ distinct integers is the permutation in $S_n$ that has the same relative order as the sequence. For example, the standardization of $4917$ is $2413$.
Let $\mathcal F\subseteq\bigcup_{n\geq 0}S_n$ be a family of permutations, and let $\mathcal F_n=\mathcal F\cap S_n$. Assume that every permutation obtained by taking the standardization of a prefix of a permutation in $\mathcal F$ is also in $\mathcal F$. Let us also assume that $\mathcal F$ contains the permutation $1\in S_1$ and that there is some constant $c>1$ such that $\lvert \mathcal F_n\rvert<n!/c^n$ for all sufficiently large $n$. 

We can use the family $\mathcal F$ to split an arbitrary permutation $\pi\in S_n$ into subsequences as follows. Let $\pi_{[a,b]}$ denote the subsequence of $\pi$ consisting of entries in positions $a,a+1,\ldots,b$, and let $\overline \pi_{[a,b]}$ be the standardization of $\pi_{[a,b]}$. Set $k_0=1$. Then let $k_1$ be the smallest integer that is greater than $k_0$ and satisfies $\overline \pi_{[k_0,k_1]}\not\in\mathcal F$; we make the convention that $k_1=n+1$ if $\pi\in\mathcal F$. If $k_1\neq n+1$, let $k_2$ be the smallest integer that is greater than $k_1$ and satisfies $\overline \pi_{[k_1,k_2]}\not\in\mathcal F$; we make the convention that $k_2=n+1$ if $\overline \pi_{[k_1,n]}\in\mathcal F$. Continue defining integers $k_i$ in this greedy fashion until reaching a step at which $k_r=n+1$. Note that $\overline \pi_{[k_0,k_1-1]},\overline \pi_{[k_1,k_2-1]},\ldots,\overline \pi_{[k_{r-1},k_r-1]}$ all belong to the family $\mathcal F$. Let us call the subsequences $\pi_{[k_0,k_1-1]},\pi_{[k_1,k_2-1]},\ldots,\pi_{[k_{r-1},k_r-1]}$ the \emph{$\mathcal F$-runs} of $\pi$. Define $\mathcal F\text{-}\mathsf{sort}(\pi)$ to be the permutation obtained by sorting the $\mathcal F$-runs of $\pi$ so that their minimal entries appear in increasing order. Note that $\mathcal F\text{-}\mathsf{sort}$ is the same as $\mathsf{runsort}$ when $\mathcal F$ is the family of increasing permutations (consisting of one permutation of each length).

\begin{figure}[ht]
  \begin{center}\includegraphics[height=5.7cm]{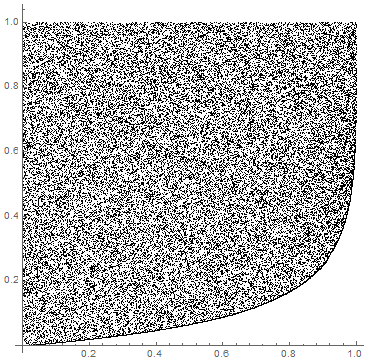}\qquad\qquad\qquad \includegraphics[height=5.7cm]{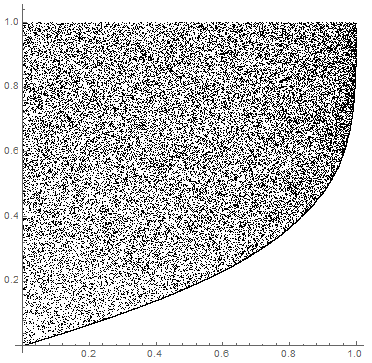}
  \end{center}
  \caption{The scaled plots of $\mathcal F^{\text{ddes}}\text{-}\mathsf{sort}(\pi)$ (left) and $\mathcal F^{\text{val}}\text{-}\mathsf{sort}(\pi)$ (right), where $\pi$ is a permutation chosen uniformly at random from $S_{50000}$.}\label{Fig2}
\end{figure}

Suppose $n$ is large. If we choose $\pi\in S_n$ uniformly at random, then the minimal entries of the $\mathcal F$-runs of $\pi$ should concentrate along a certain curve after we apply $\mathcal F\text{-}\mathsf{sort}$ to $\pi$. It should be possible to make this concentration statement precise using the ideas from Section~\ref{sec:enveloping}; however, determining exactly what the curve is could be very difficult. Here, we simply provide some images illustrating how this phenomenon might look in specific examples.

A \emph{double descent} of a permutation $\pi\in S_n$ is an index $i\in\{2,\ldots,n-1\}$ such that $\pi_{i-1}>\pi_i>\pi_{i+1}$. A \emph{valley} of $\pi$ is an index $i\in\{2,\ldots,n-1\}$ such that $\pi_{i-1}>\pi_{i}<\pi_{i+1}$. Let $\mathcal F^{\text{ddes}}$ be the family of permutations with no double descents, and let $\mathcal F^{\text{val}}$ be the family of permutations with no valleys. Figure~\ref{Fig2} shows the scaled plots of $\mathcal F^{\text{ddes}}\text{-}\mathsf{sort}(\pi)$ and $\mathcal F^{\text{val}}\text{-}\mathsf{sort}(\pi)$, where $\pi$ is a permutation chosen uniformly at random from $S_{50000}$.

One possibility for future research is the characterization of the limiting behavior of $\mathcal F\text{-}\mathsf{sort}(\pi)$ ($\pi \in S_n$ chosen uniformly at random) for various 
specific choices of $\mathcal{F}$.  Perhaps even more interesting would be the determination of more general properties such as the existence of a 
limiting permuton and conditions on $\mathcal{F}$ that guarantee some form of ``horizontal uniformity.''  One could also consider the limiting
behavior of $\mathsf{runsort}(\pi)$ when $\pi \in S_n$ is chosen non-uniformly, e.g., according to the Mallows distribution.

\section*{Acknowledgements}
The first author is supported in part by NSF grant DMS--1855464, BSF grant 2018267, and the Simons Foundation. The second author is supported by an NSF Graduate Research Fellowship (grant DGE--1656466) and a Fannie and John Hertz Foundation Fellowship.  The third author is supported by an NSF Graduate Research Fellowship (grant DGE--2039656). We are grateful to Ryan Alweiss and Peter Winkler for helpful conversations.

\end{document}